\newtheorem{thm}{Theorem}[section]
\newtheorem{lem}[thm]{Lemma}
\newtheorem{prop}[thm]{Proposition}
\newtheorem{cor}[thm]{Corollary}
\theoremstyle{definition}
\newtheorem{ques}[thm]{Question}
\newtheorem{rem}[thm]{Remark}
\newtheorem{conj}[thm]{Conjecture}
\newtheorem{rmk}[thm]{Remark}
\theoremstyle{remark}
\newtheorem*{ac}{Acknowlegments}
\newtheorem*{conv}{Convention}
\numberwithin{equation}{thm}
\def\ass{\operatorname{Ass}}
\def\cm{\mathsf{CM}}
\def\cok{\operatorname{Coker}}
\def\depth{\operatorname{depth}}
\def\Ext{\operatorname{Ext}}
\def\G{\mathcal{G}}
\def\ge{\geqslant}
\def\height{\operatorname{ht}}
\def\Hom{\operatorname{Hom}}
\def\image{\operatorname{Im}}
\def\le{\leqslant}
\def\m{\mathfrak{m}}
\def\mod{\operatorname{mod}}
\def\N{\mathbb{N}}
\def\n{\mathfrak{n}}
\def\nf{\mathrm{NF}}
\def\P{\mathbb{P}}
\def\p{\mathfrak{p}}
\def\pd{\operatorname{pd}}
\def\Q{\mathbb{Q}}
\def\q{\mathfrak{q}}
\def\s{\mathrm{S}}
\def\spec{\operatorname{Spec}}
\def\supp{\operatorname{Supp}}
\def\syz{\Omega}
\def\t{\mathrm{t}}
\def\Tor{\operatorname{Tor}}
\def\tr{\operatorname{Tr}}
\def\V{\mathrm{V}}
\def\x{\operatorname{X}}
\def\xx{\boldsymbol{x}}
\begin{document}
\allowdisplaybreaks
\title[MCM tensor products and vanishing of Ext]{Maximal Cohen--Macaulay tensor products\\
and vanishing of Ext modules}
\author{Kaito Kimura}
\address[KK]{Graduate School of Mathematics, Nagoya University, Furocho, Chikusaku, Nagoya 464-8602, Japan}
\email{m21018b@math.nagoya-u.ac.jp}
\author{Yuya Otake}
\address[YO]{Graduate School of Mathematics, Nagoya University, Furocho, Chikusaku, Nagoya 464-8602, Japan}
\email{m21012v@math.nagoya-u.ac.jp}
\author{Ryo Takahashi}
\address[RT]{Graduate School of Mathematics, Nagoya University, Furocho, Chikusaku, Nagoya 464-8602, Japan}
\email{takahashi@math.nagoya-u.ac.jp}
\urladdr{https://www.math.nagoya-u.ac.jp/~takahashi/}
\thanks{2020 {\em Mathematics Subject Classification.} 13C14, 13D07}
\thanks{{\em Key words and phrases.} (Auslander) transpose, Auslander--Reiten conjecture, canonical module, Cohen--Macaulay ring, Ext module, maximal Cohen--Macaulay module, syzygy, tensor product, Tor module}
\thanks{RT was partly supported by JSPS Grant-in-Aid for Scientific Research 19K03443}
\begin{abstract}
In this paper, we investigate the maximal Cohen--Macaulay property of tensor products of modules, and then give criteria for projectivity of modules in terms of vanishing of Ext modules.
One of the applications shows that the Auslander--Reiten conjecture holds for Cohen--Macaulay normal rings.
\end{abstract}
\maketitle
\section{Introduction}

Torsion in tensor products of (finitely generated) modules over a (commutative noetherian) local ring is one of the most classical subjects in commutative algebra.
It has been studied by Auslander \cite{A} over a regular local ring, and by Huneke and Wiegand \cite{HW} over a local hypersurface.
Recently, Celikbas and Sadeghi \cite{CS} have found a necessary condition for tensor products of modules to be maximal Cohen--Macaulay over a certain local complete intersection.

Vanishing of Ext modules over a local ring is also an actively studied subject in commutative algebra.
A lot of criteria for a given module to be projective have been described in terms of vanishing of Ext modules so far; see \cite{Ar,secm,ADS,ABS,DEL,HL,LM,ST} for instance.
One of the most important problems is a celebrated long-standing conjecture called the {\em Auslander--Reiten conjecture} \cite{AR}.

The purpose of this paper is to proceed with the study of the above two subjects.
We first explore the maximal Cohen--Macaulay property of tensor products of modules.
The main result in this direction is the following theorem.
For each module $E$ over a ring $R$, set $(-)^E=\Hom_R(-,E)$.

\begin{thm}[Theorem \ref{4}]
Let $R$ be a Cohen--Macaulay local ring with a canonical module $\omega$.
Let $M$ be a finitely generated $R$-module, and let $N$ be a maximal Cohen--Macaulay $R$-module.
Then $M\otimes_RN$ is a maximal Cohen--Macaulay $R$-module if and only if so is $\tr M\otimes_RN^\omega$.
\end{thm}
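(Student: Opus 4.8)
The plan is to funnel both conditions through a single four-term exact sequence attached to a free presentation of $M$, and to move between the two sides using the canonical duality $(-)^\omega=\Hom_R(-,\omega)$. Fix a free presentation $F_1\xrightarrow{f}F_0\to M\to 0$, so that by definition $\tr M=\cok\bigl(f^R\colon F_0^R\to F_1^R\bigr)$. Tensoring the presentation with $N$ and computing the kernel of $f\otimes_RN$ produces the exact sequence
\[
0\to\Hom_R(\tr M,N)\to F_1\otimes_RN\xrightarrow{\,f\otimes_RN\,}F_0\otimes_RN\to M\otimes_RN\to 0,
\]
where the identification of the kernel with $\Hom_R(\tr M,N)$ comes from the evaluation isomorphism $\Hom_R(F_i^R,N)\cong F_i\otimes_RN$ together with the definition of $\tr M$. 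Dually, using $\Hom_R(F_i,N^\omega)\cong\Hom_R(F_i\otimes_RN,\omega)=(F_i\otimes_RN)^\omega$, one checks that the cokernel of $\Hom_R(f,N^\omega)=(f\otimes_RN)^\omega$ is exactly $\tr M\otimes_RN^\omega$. Thus $M\otimes_RN$ and $\tr M\otimes_RN^\omega$ are the two outer ends of one presentation and of its $\omega$-dual, while the middle terms $F_i\otimes_RN$ are finite direct sums of copies of $N$, hence maximal Cohen--Macaulay.

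Assume first that $M\otimes_RN$ is maximal Cohen--Macaulay, and set $I=\image(f\otimes_RN)$. I would split the displayed sequence into $0\to I\to F_0\otimes_RN\to M\otimes_RN\to 0$ and $0\to\Hom_R(\tr M,N)\to F_1\otimes_RN\to I\to 0$. Two applications of the depth inequality $\depth A\ge\min\{\depth B,\depth C+1\}$ for a short exact sequence $0\to A\to B\to C\to 0$ then force, in turn, $I$ and $\Hom_R(\tr M,N)$ to be maximal Cohen--Macaulay, since every other module in sight has depth $\dim R$. Hence all four terms of the sequence are maximal Cohen--Macaulay. Because $\Ext^1_R(L,\omega)=0$ for maximal Cohen--Macaulay $L$, the functor $(-)^\omega$ carries short exact sequences of maximal Cohen--Macaulay modules to short exact sequences; splitting the four-term sequence into two such and dualizing each gives
\[
0\to(M\otimes_RN)^\omega\to(F_0\otimes_RN)^\omega\xrightarrow{(f\otimes_RN)^\omega}(F_1\otimes_RN)^\omega\to\Hom_R(\tr M,N)^\omega\to 0.
\]
The cokernel of the middle map is on one hand $\Hom_R(\tr M,N)^\omega$ and, by the computation above, on the other hand $\tr M\otimes_RN^\omega$. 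Therefore $\tr M\otimes_RN^\omega\cong\Hom_R(\tr M,N)^\omega$, and this is maximal Cohen--Macaulay, being the canonical dual of the maximal Cohen--Macaulay module $\Hom_R(\tr M,N)$.

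For the converse I would simply invoke the implication just proved, applied to $\tr M$ in place of $M$ and to the maximal Cohen--Macaulay module $N^\omega$ in place of $N$: it yields that $\tr M\otimes_RN^\omega$ maximal Cohen--Macaulay implies $\tr(\tr M)\otimes_R(N^\omega)^\omega$ maximal Cohen--Macaulay. Since $(N^\omega)^\omega\cong N$ because $N$ is maximal Cohen--Macaulay, and since $\tr(\tr M)\cong M$ up to free summands (tensoring which with $N$ only adds maximal Cohen--Macaulay summands, irrelevant to the property), this is precisely the assertion that $M\otimes_RN$ is maximal Cohen--Macaulay.

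The step I expect to be the crux is the passage through canonical duality, and in particular the realization that one must \emph{not} compare $M\otimes_RN$ directly with its dual $(M\otimes_RN)^\omega=\Hom_R(M,N^\omega)$: maximal Cohen--Macaulayness of $M\otimes_RN$ is strictly stronger than that of its canonical dual, so a naive dualization loses the equivalence. The device that avoids this is to first propagate the maximal Cohen--Macaulay property along the entire four-term sequence, so that $(-)^\omega$ becomes exact on it; only then can the resulting cokernel be identified simultaneously with $\tr M\otimes_RN^\omega$ and with a manifestly maximal Cohen--Macaulay canonical dual. Accordingly, the main bookkeeping to carry out carefully is the pair of cokernel identifications—the transpose one through $\Hom_R(F_i^R,N)\cong F_i\otimes_RN$, and the duality one through Hom--tensor adjunction—since everything else reduces to the depth inequality and the exactness of $(-)^\omega$ on maximal Cohen--Macaulay modules.
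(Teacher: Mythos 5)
Your proof is correct, and its engine is the same as the paper's: a four-term exact sequence built from a free presentation of $M$, with $M\otimes_RN$ at one end and a Hom module at the other, which becomes exact under $(-)^\omega$ once all terms are known to be maximal Cohen--Macaulay, after which the dual cokernel is identified with $\tr M\otimes_RN^\omega$. Indeed your sequence is the paper's sequence \eqref{2} in disguise: your kernel term $\Hom_R(\tr M,N)$ is isomorphic to the paper's $(\tr M\otimes_RN^\dag)^\dag$ by Hom--tensor adjunction and $N^{\dag\dag}\cong N$. Where you genuinely diverge is the converse. The paper treats the two directions symmetrically: it splices a second four-term sequence \eqref{1} ending in $\tr M\otimes_RN^\dag$ and beginning with $\Hom_R(M,N^\dag)$, and proves the backward implication by dualizing \eqref{1} and comparing with \eqref{2}, exactly mirroring the forward argument. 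You instead deduce the converse formally from the forward implication applied to the pair $(\tr M,\,N^\omega)$, using $(N^\omega)^\omega\cong N$ and $\tr\tr M\cong M$ up to free summands, with the observation that free summands only contribute copies of the maximal Cohen--Macaulay module $N$ to the tensor product. Both routes are sound. The paper's symmetric treatment is self-contained and never needs the biduality of the transpose; your reduction is shorter and makes the involutive symmetry of the statement explicit, at the cost of invoking $\tr\tr M\cong M$ (up to free summands) and the attendant summand bookkeeping, plus the mild care that the transpose formed from your chosen presentation agrees with the minimally defined one up to free summands --- again harmless for the same reason.
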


As an application of the above theorem, we recover in Corollary \ref{lem1} a result in \cite{LM} on the relationship between maximal Cohen--Macaulayness of tensor products and vanishing of Ext modules.
The original proof given in \cite{LM} is based on the techniques of Hilbert--Samuel multiplicity.
Our proof is simple and elementary without using multiplicity theory; only fundamental properties of a canonical module suffice.

Next we consider vanishing of Ext modules over a Cohen--Macaulay local ring, using our observations on maximal Cohen--Macaulay tensor products.
Our main result in this direction is the following theorem, which is a special case of Theorems \ref{them1} and \ref{k1}.

\begin{thm}[Theorems \ref{them1} and \ref{k1}]
Let $R$ be a Cohen--Macaulay local ring of dimension $d\ge2$.
Let $M$ be a finitely generated $R$-module which is locally of finite projective dimension in codimension one.
\begin{enumerate}[\rm(1)]
\item
Let $C$ be a maximal Cohen--Macaulay $R$-module with full support.
Suppose that $\Ext_R^i(M,C)=0$ for all $1\le i\le d$ and $\Ext_R^j(M^R,M^C)=0$ for all $1\le j\le d-1$.
Then $M$ is free.
\item
If $\Ext^i_R(M,R)=0$ for all $1 \le i \le 2d+1$ and $\Ext^j_R(M,M)=0$ for all $1 \le j \le d-1$, then $M$ is free.
\end{enumerate}
\end{thm}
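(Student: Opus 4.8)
The plan is to reduce the whole statement to the vanishing of the Auslander transpose $\tr M$, using Theorem \ref{4} to trade tensor products for transposes and a canonical module to convert the Ext hypotheses into depth statements. Since freeness, finiteness of projective dimension in codimension one, and all the Ext conditions are unaffected by completion, I would first assume $R$ complete, so that a canonical module $\omega$ exists and Theorem \ref{4} applies. Write $(-)^\omega=\Hom_R(-,\omega)$ and set $W=M\otimes_RC^\omega$, noting $C^{\omega\omega}\cong C$ because $C$ is maximal Cohen--Macaulay.

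There are two robust reductions. First, \emph{$M$ is free in codimension one}: for $\p$ with $\height\p\le1$ we have $\pd_{R_\p}M_\p<\infty$ and $C_\p\ne0$ maximal Cohen--Macaulay, and minimality of a free resolution together with Nakayama's lemma gives $\Ext^{\pd_{R_\p}M_\p}_{R_\p}(M_\p,C_\p)\ne0$ whenever $\pd_{R_\p}M_\p\ge1$; as $\pd_{R_\p}M_\p\le\dim R_\p\le1\le d$, hypothesis (1) forces $M_\p$ free, so $\tr M$ is locally free in codimension one and every module $\Ext^{\ge1}_R(\tr M,-)$ and $\Tor^R_{\ge1}(M,-)$ is supported in codimension $\ge2$. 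Second, \emph{two Hom modules are maximal Cohen--Macaulay}: dualizing a free resolution of $M$ into $C$, the vanishing of $\Ext^i_R(M,C)$ for $1\le i\le d$ gives an exact sequence $0\to M^C\to C^{b_0}\to\cdots\to C^{b_d}\to Y\to0$ with all $C^{b_i}$ maximal Cohen--Macaulay, and the depth inequality along it yields $\depth_RM^C\ge\min\{d,\depth_RY+d+1\}=d$, so $M^C$ is maximal Cohen--Macaulay; repeating the count for $\Hom_R(M^R,-)$ with $\Ext^j_R(M^R,M^C)=0$ ($1\le j\le d-1$) and the maximal Cohen--Macaulayness of $M^C$ shows $\Hom_R(M^R,M^C)$ is maximal Cohen--Macaulay as well.

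I would then assemble these through Theorem \ref{4}. The adjunction $\Hom_R(M,\Hom_R(C^\omega,\omega))\cong\Hom_R(W,\omega)$ with $C^{\omega\omega}\cong C$ identifies $M^C\cong W^\omega$, so $W^{\omega\omega}$ is maximal Cohen--Macaulay; $\omega$-dualizing the evaluation $M^R\otimes_RC\to M^C$, an isomorphism in codimension one, identifies $\Hom_R(M^R,C^\omega)\cong W^{\omega\omega}$. The Auslander--Bridger sequence $0\to\Ext^1_R(\tr M,C^\omega)\to W\to\Hom_R(M^R,C^\omega)\to\Ext^2_R(\tr M,C^\omega)\to0$ then realizes the canonical map $W\to W^{\omega\omega}$ with kernel $\Ext^1_R(\tr M,C^\omega)$ and cokernel $\Ext^2_R(\tr M,C^\omega)$, both supported in codimension $\ge2$ by the first reduction; once these obstructions vanish, $W\cong W^{\omega\omega}$ is maximal Cohen--Macaulay, which by Theorem \ref{4} is equivalent to $\tr M\otimes_RC$ being maximal Cohen--Macaulay.

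The main obstacle is the final passage to freeness, which \emph{cannot} come from maximal Cohen--Macaulayness of $\tr M\otimes_RC$ alone: over a two-dimensional normal Gorenstein domain a nontrivial rank-one reflexive module $M$ satisfies $\Ext^i_R(M,R)=0$ for all $i\ge1$ and has $\tr M$ (hence $\tr M\otimes_RR$) maximal Cohen--Macaulay, yet is not free, because its self-Ext $\Ext^1_R(M,M)$ is nonzero and the second hypothesis fails. So the second hypothesis must enter decisively in a separate splitting step that removes the non-free part of $M$; I expect this to be driven by the maximal Cohen--Macaulayness of $\Hom_R(M^R,M^C)$ above together with dimension shifting (using that $M^R$ is stably $\syz^2\tr M$ and that self-Ext controls the splitting of syzygy sequences), and I expect this—with the essential use of $d\ge2$—to be the technical heart. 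For part (2) I would specialize part (1) to $C=R$: then $M^R=M^{\ast}$ and $\Ext^i_R(M,R)=0$ ($1\le i\le d$) supplies hypothesis (1), while the wider range $1\le i\le2d+1$ makes $M$ reflexive and exhibits $M^{\ast}$, stably $\syz^2\tr M$, as a syzygy high enough to be totally reflexive through degree $d-1$; Auslander--Bridger duality then yields $\Ext^j_R(M^{\ast},M^{\ast})\cong\Ext^j_R(M,M)$ for $1\le j\le d-1$, converting $\Ext^j_R(M,M)=0$ into the second hypothesis of part (1), whence $M$ is free.
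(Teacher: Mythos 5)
Your preparatory reductions (completion, local freeness of $M$ in codimension one, maximal Cohen--Macaulayness of $\Hom_R(M,C)$) do match steps of the paper's argument, but the proposal stops exactly where a proof must produce freeness, and the idea that fills that hole in the paper is absent. You correctly observe that maximal Cohen--Macaulayness of $\tr M\otimes_RC$ cannot by itself force $M$ to be free, but you then only ``expect'' a splitting argument driven by $\Hom_R(M^\ast,\Hom_R(M,C))$ and dimension shifting; no such argument is given, and this is not a routine step one can leave to the reader --- it is the paper's central new ingredient. What the paper actually does is convert the second Ext hypothesis into a Tor-vanishing and invoke Proposition \ref{lhom van}(1): if $N\ne0$ and $\Tor_1^R(\tr M,M\otimes_RN)=0$, then $M$ is free, proved by induction on $\mu(M)$ via the trace ideal $I=\image(M\otimes_RM^\ast\to R)$ and Nakayama's lemma. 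Concretely, with $\omega$ a canonical module, Corollary \ref{lem1} makes $M\otimes_RC^\dag$ maximal Cohen--Macaulay, hence so is $\Hom_R(M,C)\cong(M\otimes_RC^\dag)^\dag$, and then
$$
0=\Ext_R^{d-1}(M^\ast,\Hom_R(M,C))^\vee\cong\Ext_R^{d+1}(\tr M,\Hom_R(M,C))^\vee\cong\Tor_1^R(\tr M,M\otimes_RC^\dag),
$$
using $M^\ast\cong\syz^2\tr M$ up to free summands and the Matlis duality of Lemma \ref{Ext and Tor}(2); Proposition \ref{lhom van}(1) then finishes. Note also that Lemma \ref{Ext and Tor}(2) requires $\tr M$ to be locally free on the whole punctured spectrum, which for $d>2$ is not known at the outset; the paper obtains it by an induction on height (step (2)(ii) of the proof of Theorem \ref{them1}), another ingredient missing from your plan. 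Your substitute --- that the kernel and cokernel $\Ext_R^1(\tr M,C^\dag)$ and $\Ext_R^2(\tr M,C^\dag)$ of $W\to W^{\dag\dag}$ are supported in codimension $\ge2$ --- does not make them vanish, so even your intermediate claim that $W$ is maximal Cohen--Macaulay is unproved.

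Your reduction of (2) to (1) with $C=R$ also rests on two unjustified claims. First, vanishing of $\Ext_R^i(M,R)$ for $1\le i\le2d+1$ does not make $M$ reflexive: reflexivity is governed by $\Ext_R^1(\tr M,R)=\Ext_R^2(\tr M,R)=0$, about which nothing is assumed (over non-Gorenstein Cohen--Macaulay rings, Ext-vanishing against $R$ gives no control on $\Ext_R^\ast(\tr M,R)$). Second, the isomorphism $\Ext_R^j(M^\ast,M^\ast)\cong\Ext_R^j(M,M)$ you attribute to Auslander--Bridger duality is not available for such $M$. The paper avoids both problems by replacing $M$ with the high syzygy $L=\syz^{d+2}M$: Iyama's result \cite[Proposition 1.1.1]{I} turns $M\in\G_{2d+1,0}$ into $L\in\G_{d-1,d+2}$, so $L$ is reflexive and $\Ext_R^i(L^\ast,R)=0$ for $1\le i\le d$, while the long range of vanishing of $\Ext_R^\ast(M,R)$ lets one transport $\Ext_R^j(M,M)=0$ to $\Ext_R^j(L,L)=0$ by dimension shifting; Theorem \ref{them1} applied to $L^\ast$ then gives $L$ free, hence $\pd_RM<\infty$, and $M$ is free by Lemma \ref{Ext and Tor}(1). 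Without this syzygy step, specializing (1) at $C=R$ cannot be made to work.
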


The above theorem (more precisely, Theorems \ref{them1} and \ref{k1}) yields various applications.
Among other things, the following corollary is obtained.

\begin{cor}[Corollary \ref{k2}]
Let $R$ be a Cohen--Macaulay ring.
If $R_\p$ satisfies the Auslander--Reiten conjecture for every $\p\in\spec R$ with $\height\p\le1$, then $R_\p$ satisfies that conjecture for every $\p\in\spec R$.
In particular, the Auslander--Reiten conjecture holds true for an arbitrary Cohen--Macaulay normal ring.
\end{cor}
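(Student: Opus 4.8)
The plan is to deduce the first assertion from Theorem \ref{k1}(2) by localizing and inducting on height, and then to derive the normal case by checking the regular local situation. Recall that a local ring $(S,\n)$ satisfies the Auslander--Reiten conjecture when every finitely generated $S$-module $L$ with $\Ext_S^i(L,L\oplus S)=0$ for all $i\ge1$ is free; equivalently $\Ext_S^i(L,L)=0=\Ext_S^i(L,S)$ for all $i\ge1$ forces freeness. Fix a prime $\p$ of the Cohen--Macaulay ring $R$; since $R$ is Cohen--Macaulay, $R_\p$ is Cohen--Macaulay local of dimension $\height\p$. I would prove that $R_\p$ satisfies the Auslander--Reiten conjecture by induction on $d=\height\p$. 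When $d\le1$ this is exactly the hypothesis, which settles the base case.

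For the inductive step assume $d\ge2$ and that $R_\q$ satisfies the conjecture for every prime $\q$ with $\height\q<d$. Let $M$ be a finitely generated $R_\p$-module with $\Ext_{R_\p}^i(M,M\oplus R_\p)=0$ for all $i\ge1$; the goal is to show $M$ is free. The key point is that for every nonmaximal prime of $R_\p$, that is, for every prime $\q\subsetneq\p$ of $R$, the Cohen--Macaulay (hence catenary and locally equidimensional) property forces $\height\q<\height\p=d$, while $(R_\p)_{\q R_\p}=R_\q$. Because Ext of finitely generated modules over a noetherian ring commutes with localization, $M_\q$ satisfies $\Ext_{R_\q}^i(M_\q,M_\q\oplus R_\q)=0$ for all $i\ge1$, so the induction hypothesis yields that $M_\q$ is free. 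Thus $M$ is free on the punctured spectrum of $R_\p$; in particular, since $d\ge2$, every prime of height at most one is nonmaximal, so $M$ is locally of finite projective dimension in codimension one.

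It then remains to verify the two Ext-vanishing conditions in Theorem \ref{k1}(2), and these are subsumed by the global vanishing: $\Ext_{R_\p}^i(M,R_\p)=0$ for all $i\ge1$ (in particular for $1\le i\le 2d+1$) and $\Ext_{R_\p}^j(M,M)=0$ for all $j\ge1$ (in particular for $1\le j\le d-1$). Applying Theorem \ref{k1}(2) to the $d$-dimensional Cohen--Macaulay local ring $R_\p$ gives that $M$ is free, completing the induction and hence the first assertion. The substance of the argument is carried entirely by Theorem \ref{k1}(2); the only genuine care needed here is the strict height drop $\height\q<d$ for $\q\subsetneq\p$, which relies on the Cohen--Macaulay property, together with the extraction of the finite vanishing ranges demanded by Theorem \ref{k1}(2) from the global Auslander--Reiten vanishing and the punctured-spectrum freeness supplied by the induction.

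Finally, for a Cohen--Macaulay normal ring $R$, Serre's criterion provides condition $(R_1)$, so $R_\p$ is regular local whenever $\height\p\le1$. Over a regular local ring every finitely generated module $L$ has finite projective dimension, and if $\pd L=p<\infty$ then $\Ext^p(L,R)\ne0$; hence $\Ext^i(L,R)=0$ for all $i\ge1$ already forces $\pd L=0$, so $L$ is free and the Auslander--Reiten conjecture holds over regular local rings. Therefore the hypothesis of the first assertion is met at every prime of height at most one, and we conclude that $R_\p$ satisfies the Auslander--Reiten conjecture for every $\p\in\spec R$, i.e.\ $R$ satisfies the conjecture.
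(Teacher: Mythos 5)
Your proof is correct and takes essentially the same route as the paper's: both arguments reduce to Theorem \ref{k1}(2) applied with $n=1$ over the Cohen--Macaulay local ring $R_\p$ (of dimension $d\ge2$), using that primes of $R_\p$ of height at most one contract to primes of $R$ of height at most one where the hypothesis applies, and both settle the normal case via regularity in codimension one. The only difference is organizational: your induction on height (which yields freeness of $M$ on the whole punctured spectrum of $R_\p$) is superfluous, since the height-one hypothesis already supplies the codimension-one local freeness that Theorem \ref{k1}(2) requires, but this extra step is harmless.
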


The Auslander--Reiten conjecture asserts that every module $M$ over a ring $R$ such that $\Ext_R^{>0}(M,M\oplus R)=0$ is projective.
This conjecture is known to hold true if $R$ is a locally excellent Cohen--Macaulay normal ring containing the field of rational numbers \cite{HL}, or if $R$ is a Gorenstein normal ring \cite{Ar}, or if $R$ is a local Cohen--Macaulay normal ring and $M$ is a maximal Cohen--Macaulay module such that $\Hom_R(M,M)$ is free \cite{DEL}.
The above corollary gives a common generalization of these three facts.
Other applications of our theorem recover and refine a lot of results in the literature; see Remark \ref{12} for details.

We close the section by stating our convention adopted throughout the remainder of this paper.

\begin{conv}
Let $R$ be a commutative noetherian ring.
All modules are assumed to be finitely generated.
Call an $R$-module $M$ {\em maximal Cohen--Macaulay} if $\depth M_\p=\dim R_\p$ for all $\p\in\supp M$, so that the zero module is thought of as maximal Cohen--Macaulay.
Denote by $\mod R$ the category of (finitely generated) $R$-modules.
Set $(-)^\ast=\Hom_R(-,R)$, and let $(-)^\vee$ be the Matlis dual.
Whenever $R$ is a Cohen--Macaulay local ring with a canonical module $\omega$, put $(-)^\dag=\Hom_R(-,\omega)$.
The set of nonnegative integers is denoted by $\N$, while the transpose of a matrix $A$ by ${}^\t\! A$.
We omit subscripts/superscripts if there is no ambiguity.
\end{conv}

\section{Maximal Cohen--Macaulay tensor products}

The main theme of this section is maximal Cohen--Macaulayness of tensor products of modules.
We give a certain operation which preserves this property, and then state applications.
We begin with recalling a couple of elementary facts on maximal Cohen--Macaulay modules.

\begin{rem}\label{3}
Let $R$ be a Cohen--Macaulay local ring.
Denote by $\cm(R)$ the full subcategory of $\mod R$ consisting of maximal Cohen--Macaulay $R$-modules.
Let $0\to L\to M\to N\to0$ be an exact sequence of $R$-modules.
If $M$ and $N$ belong to $\cm(R)$, then so does $L$.
If $R$ admits a canonical module, then the assignment $X\mapsto X^\dag$ gives a duality of the additive category $\cm(R)$.
\end{rem}

Let $R$ be a local ring and $M$ an $R$-module.
Denote by $\syz M$ and $\tr M$ the {\em (first) syzygy} and {\em (Auslander) transpose} of $M$, respectively.
These are defined as follows.
Let $F=(\cdots\xrightarrow{\partial_3}F_2\xrightarrow{\partial_2}F_1\xrightarrow{\partial_1}F_0\to0)$ be a minimal free resolution of $M$.
Then $\syz M=\image\partial_1$ and $\tr M=\cok(\partial_1^\ast)$.
We put $\syz^0M=M$, and for an integer $n>0$ the {\em $n$th syzygy} $\syz^nM$ is defined inductively by $\syz^nM=\syz(\syz^{n-1}M)$, whence $\syz^nM\cong\image\partial_n$.
The modules $\tr M$ and $\syz^nM$ for each $n\ge0$ are uniquely determined up to isomorphism since so is a minimal free resolution $F$ of $M$.
Note that $M^\ast\cong\syz^2\tr M$ up to free summands.

The main result of this section is the following theorem.

\begin{thm}\label{4}
Let $R$ be a Cohen--Macaulay local ring with a canonical module.
Let $M,N$ be $R$-modules and assume that $N$ is maximal Cohen--Macaulay.
Then $M\otimes_RN$ is a maximal Cohen--Macaulay $R$-module if and only if so is $\tr M\otimes_RN^\dag$.
\end{thm}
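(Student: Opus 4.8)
The plan is to reduce the whole statement to a single four-term exact sequence obtained by dualizing a presentation of $M\otimes_RN$ into the canonical module $\omega$. Take a minimal free resolution of $M$ with $F_1\xrightarrow{\partial_1}F_0\to M\to0$, let $A$ represent $\partial_1$, and set $b_i=\operatorname{rank}F_i$. Tensoring this presentation with $N$ gives $N^{b_1}\xrightarrow{A}N^{b_0}\to M\otimes_RN\to0$, while tensoring the presentation $F_0^\ast\xrightarrow{\partial_1^\ast}F_1^\ast\to\tr M\to0$ of the transpose with $N^\dag$ gives $(N^\dag)^{b_0}\xrightarrow{\phi}(N^\dag)^{b_1}\to\tr M\otimes_RN^\dag\to0$, where $\phi=\partial_1^\ast\otimes N^\dag$. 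Applying $(-)^\dag=\Hom_R(-,\omega)$ to the first presentation and using $\Hom_R(N^{b},\omega)=(N^\dag)^{b}$, the dual map is precisely $\phi$. I therefore expect the exact sequence
\[
0\to(M\otimes_RN)^\dag\to(N^\dag)^{b_0}\xrightarrow{\ \phi\ }(N^\dag)^{b_1}\to\tr M\otimes_RN^\dag\to0,
\]
so that $(M\otimes_RN)^\dag=\ker\phi$ and $\tr M\otimes_RN^\dag=\cok\phi$. Since $N$ is maximal Cohen--Macaulay, so is $N^\dag$ by Remark \ref{3}, whence each $(N^\dag)^{b_i}$ lies in $\cm(R)$.

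Next I would observe that the assertion is symmetric: replacing $(M,N)$ by $(\tr M,N^\dag)$ returns the conclusion to itself, because $\tr\tr M\cong M$ up to free summands and $N^{\dag\dag}\cong N$, and tensoring a free summand with the maximal Cohen--Macaulay module $N$ (resp.\ $N^\dag$) does not affect maximal Cohen--Macaulayness. Hence it suffices to prove one implication, say that $M\otimes_RN\in\cm(R)$ forces $\tr M\otimes_RN^\dag\in\cm(R)$.

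For that implication I would argue using only Remark \ref{3}. Write $W=\image(A\colon N^{b_1}\to N^{b_0})$ and $V=\ker A$. From $0\to W\to N^{b_0}\to M\otimes_RN\to0$, the two right-hand terms are maximal Cohen--Macaulay, so $W\in\cm(R)$; then $0\to V\to N^{b_1}\to W\to0$ gives $V\in\cm(R)$ for the same reason. Because $(-)^\dag$ is an exact duality on $\cm(R)$ (the obstruction $\Ext_R^1(-,\omega)$ vanishes on maximal Cohen--Macaulay modules), dualizing these two short exact sequences and splicing them along $W^\dag$ recovers the four-term sequence above and identifies $\tr M\otimes_RN^\dag=\cok\phi\cong V^\dag$, which is maximal Cohen--Macaulay.

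The main obstacle I anticipate is the bookkeeping that makes the four-term sequence honest: one must verify that the map obtained after dualizing and splicing the two short exact sequences coincides with $\phi=\partial_1^\ast\otimes N^\dag$ arising from the transpose, i.e.\ that tensoring the presentation of $M$ by $N$ and then dualizing is compatible with tensoring the presentation of $\tr M$ by $N^\dag$. This is a naturality check on the biduality isomorphisms $N\cong N^{\dag\dag}$ and $M\otimes_RN\cong(M\otimes_RN)^{\dag\dag}$. A secondary point is that the present definition of maximal Cohen--Macaulay allows non-full support, so one should confirm that the characterization ``$Z\in\cm(R)$ iff $\Ext_R^{>0}(Z,\omega)=0$'' and the exactness of $(-)^\dag$ used above remain valid at this level of generality; both follow from local duality.
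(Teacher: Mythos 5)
Your proposal is correct. Its core construction is the same as the paper's: your four-term sequence
\[
0\to(M\otimes_RN)^\dag\to(N^\dag)^{b_0}\xrightarrow{\ \phi\ }(N^\dag)^{b_1}\to\tr M\otimes_RN^\dag\to0
\]
is (up to the adjunction isomorphism $(M\otimes_RN)^\dag\cong\Hom_R(M,N^\dag)$) exactly the paper's sequence \eqref{1}, and your forward implication --- break the tensored presentation $N^{b_1}\xrightarrow{A}N^{b_0}\to M\otimes N\to0$ into short exact sequences, apply Remark \ref{3} to get $\image A$ and $\ker A$ maximal Cohen--Macaulay, dualize using $\Ext^{>0}_R(-,\omega)=0$ on $\cm(R)$, and identify $\tr M\otimes N^\dag\cong\cok\phi\cong(\ker A)^\dag$ --- is the paper's forward argument, merely streamlined so that sequence \eqref{2} never needs to be written down. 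Where you genuinely depart from the paper is the converse: the paper runs a second, mirror-image argument (all terms of \eqref{1} are maximal Cohen--Macaulay, dualize \eqref{1}, compare with \eqref{2}, conclude $M\otimes N\cong\Hom(M,N^\dag)^\dag$), whereas you deduce it formally from the one proved implication via the symmetry $(M,N)\mapsto(\tr M,N^\dag)$, using $N^{\dag\dag}\cong N$ and the Auslander--Bridger fact that $\tr\tr M\cong M$ up to free summands. This is valid: free summands of $M$ contribute only direct summands of the form $N^{\oplus k}$ or $(N^\dag)^{\oplus k}$ to the tensor products, and direct sums and summands of maximal Cohen--Macaulay modules are maximal Cohen--Macaulay under the paper's (localized) definition. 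Your route halves the explicit work at the price of importing the stable biduality of the transpose, which the paper never needs; the paper's two explicit arguments keep the proof self-contained modulo Remark \ref{3} and the standard facts $N^{\dag\dag}\cong N$ and $\Ext_R^{>0}(X,\omega)=0$ for $X\in\cm(R)$, which your argument also uses. The naturality check you flag as the main obstacle --- that the $\omega$-dual of $A$ acting on copies of $N$ is ${}^\t\!A$ acting on copies of $N^\dag$, hence agrees with $\partial_1^\ast\otimes N^\dag$ --- is routine matrix bookkeeping and is precisely the identification the paper makes implicitly when it splices its sequences.
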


\begin{proof}
We freely use Remark \ref{3}.
Take an exact sequence $\alpha:R^{\oplus n}\xrightarrow{A}R^{\oplus m}\to M\to0$, where $A$ is an $m\times n$ matrix.
Dualizing $\alpha$ by $R$ gives an exact sequence $R^{\oplus m}\xrightarrow{{}^\t\! A}R^{\oplus n}\to\tr M\to0$.
Tensoring $N^\dag$ with it, we get an exact sequence $\beta:(N^\dag)^{\oplus m}\xrightarrow{{}^\t\! A}(N^\dag)^{\oplus n}\to\tr M\otimes N^\dag\to0$.
Dualizing $\alpha$ by $N^\dag$ gives an exact sequence $0\to\Hom(M,N^\dag)\to(N^\dag)^{\oplus m}\xrightarrow{{}^\t\! A}(N^\dag)^{\oplus n}$.
Splicing this with $\beta$ yields an exact sequence
\begin{equation}\label{1}
0\to\Hom(M,N^\dag)\to(N^\dag)^{\oplus m}\xrightarrow{{}^\t\! A}(N^\dag)^{\oplus n}\to\tr M\otimes N^\dag\to0.
\end{equation}
Tensoring $N$ with $\alpha$ gives an exact sequence $\gamma:N^{\oplus n}\xrightarrow{A}N^{\oplus m}\to M\otimes N\to0$.
Applying $(-)^\dag$ to $\beta$, we get an exact sequence $0\to(\tr M\otimes N^\dag)^\dag\to N^{\oplus n}\xrightarrow{A}N^{\oplus m}$.
Splicing this with $\gamma$ yields an exact sequence
\begin{equation}\label{2}
0\to(\tr M\otimes N^\dag)^\dag\to N^{\oplus n}\xrightarrow{A}N^{\oplus m}\to M\otimes N\to0.
\end{equation}

Assume that $M\otimes_RN$ is maximal Cohen--Macaulay.
Then all the terms of \eqref{2} are maximal Cohen--Macaulay.
Applying $(-)^\dag$ to \eqref{2} induces an exact sequence
$$
0\to(M\otimes N)^\dag\to(N^\dag)^{\oplus m}\xrightarrow{{}^\t\! A}(N^\dag)^{\oplus n}\to(\tr M\otimes N^\dag)^{\dag\dag}\to0,
$$
all of whose terms are maximal Cohen--Macaulay.
It follows from this and \eqref{1} that $\tr M\otimes N^\dag\cong\cok((N^\dag)^{\oplus m}\xrightarrow{{}^\t\! A}(N^\dag)^{\oplus n})\cong(\tr M\otimes N^\dag)^{\dag\dag}$, and thus $\tr M\otimes N^\dag$ is maximal Cohen--Macaulay.

Conversely, assume $\tr M\otimes N^\dag$ is a maximal Cohen--Macaulay $R$-module.
Then all the terms of \eqref{1} are maximal Cohen--Macaulay.
Applying $(-)^\dag$ to \eqref{1}, we get an exact sequence
$$
0\to(\tr M\otimes N^\dag)^\dag\to N^{\oplus n}\xrightarrow{A}N^{\oplus m}\to\Hom(M,N^\dag)^\dag\to0,
$$
and all the terms are maximal Cohen--Macaulay.
This sequence together with \eqref{2} yields $M\otimes N\cong\cok(N^{\oplus n}\xrightarrow{A}N^{\oplus m})\cong\Hom(M,N^\dag)^\dag$, and therefore $M\otimes N$ is a maximal Cohen--Macaulay module.
\end{proof}

We note an immediate corollary of Theorem \ref{4}.

\begin{cor}
Let $R$ be a Cohen--Macaulay local ring with a canonical module $\omega$, and let $M$ be an $R$-module.
Then, $M$ is maximal Cohen--Macaulay if and only if so is $\tr M\otimes_R\omega$.
The module $\tr M$ is maximal Cohen--Macaulay if and only if so is $M\otimes_R\omega$.
\end{cor}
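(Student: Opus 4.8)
The plan is to derive both equivalences by specializing Theorem \ref{4} to the two distinguished maximal Cohen--Macaulay modules $R$ and $\omega$. The only inputs needed beyond Theorem \ref{4} itself are the standard facts that over a Cohen--Macaulay local ring $R$ with canonical module $\omega$, both $R$ and $\omega$ are maximal Cohen--Macaulay, together with the canonical identifications $R^\dag=\Hom_R(R,\omega)\cong\omega$ and $\omega^\dag=\Hom_R(\omega,\omega)\cong R$.

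For the first assertion, I would set $N=R$ in Theorem \ref{4}. Since $R$ is maximal Cohen--Macaulay, the theorem applies, and there are isomorphisms $M\otimes_RN=M\otimes_RR\cong M$ and $\tr M\otimes_RN^\dag=\tr M\otimes_RR^\dag\cong\tr M\otimes_R\omega$. Substituting these into the equivalence furnished by Theorem \ref{4} yields at once that $M$ is maximal Cohen--Macaulay if and only if $\tr M\otimes_R\omega$ is.

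For the second assertion, I would instead set $N=\omega$. As $\omega$ is maximal Cohen--Macaulay, Theorem \ref{4} again applies; this time $M\otimes_RN=M\otimes_R\omega$ and $\tr M\otimes_RN^\dag=\tr M\otimes_R\omega^\dag\cong\tr M\otimes_RR\cong\tr M$. Feeding these identifications into Theorem \ref{4} shows that $M\otimes_R\omega$ is maximal Cohen--Macaulay if and only if $\tr M$ is, as desired.

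There is no genuine obstacle: the corollary is a direct specialization of Theorem \ref{4}, and the sole auxiliary ingredients are the maximal Cohen--Macaulayness of $R$ and $\omega$ and the pair of identifications $R^\dag\cong\omega$, $\omega^\dag\cong R$. The slight asymmetry between the two parts---one tensoring $\tr M$ with $\omega$, the other leaving $\tr M$ untwisted---arises precisely from interchanging the roles of $R$ and $\omega$ as the module $N$, which swaps $N$ and $N^\dag$ in the statement of Theorem \ref{4}.
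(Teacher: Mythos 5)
Your proposal is correct and coincides with the paper's own proof, which likewise just applies Theorem \ref{4} with $N=R$ (using $R^\dag\cong\omega$) for the first assertion and with $N=\omega$ (using $\omega^\dag\cong\Hom_R(\omega,\omega)\cong R$) for the second. The paper states this in one line; you have simply written out the same specialization in full detail.
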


\begin{proof}
Applying Theorem \ref{4} to $N=R$ (resp. $N=\omega$) implies the first (resp. second) assertion.
\end{proof}

We can deduce a result of Lyle and Monta\~no \cite[Lemma 3.4(1)]{LM} directly from Theorem \ref{4}.
The proof of \cite[Lemma 3.4(1)]{LM} refers to that of \cite[Lemma 5.3]{DEL}, where the same techniques of (Hilbert--Samuel) multiplicity as in the proof of \cite[Lemma 1.6]{HL} play an essential role.
It is worth mentioning that our methods are irrelevant to multiplicity theory, and are indeed simpler and more elementary.

\begin{cor}[Lyle--Monta\~no]\label{lem1}
Let $R$ be a $d$-dimensional Cohen--Macaulay local ring with a canonical module.
Let $M,N$ be $R$-modules and assume that $N$ is maximal Cohen--Macaulay.
If $\Ext_R^i(M,N)=0$ for all $1\le i\le d$, then the tensor product $M\otimes_R N^\dagger$ is a maximal Cohen--Macaulay $R$-module.
\end{cor}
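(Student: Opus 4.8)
The plan is to deduce the statement from Theorem \ref{4} by first converting $M\otimes_RN^\dag$ into a tensor product of the shape handled by that theorem, and then to analyze the resulting module by depth estimates fueled by the vanishing of Ext. Throughout I would freely use Remark \ref{3}.

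First I would record two consequences of Remark \ref{3}: since $N$ is maximal Cohen--Macaulay, so is $N^\dag$, and moreover $N^{\dag\dag}\cong N$. Applying Theorem \ref{4} to the pair $(M,N^\dag)$, whose second entry $N^\dag$ is maximal Cohen--Macaulay, then shows that $M\otimes_RN^\dag$ is maximal Cohen--Macaulay if and only if $\tr M\otimes_RN^{\dag\dag}\cong\tr M\otimes_RN$ is so. Thus it suffices to prove that $\tr M\otimes_RN$ is maximal Cohen--Macaulay. The case $d\le1$ I would treat directly (for $d=0$ every module qualifies, and for $d=1$ only $\Ext_R^1(M,N)=0$ is needed, since a nonzero submodule of a module of positive depth has positive depth), so I assume $d\ge2$. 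Next I would identify $\tr M\otimes_RN$ with a Hom module. Feeding $X=\tr M$ into the standard four-term exact sequence
$$
0\to\Ext_R^1(\tr X,N)\to X\otimes_RN\to\Hom_R(X^\ast,N)\to\Ext_R^2(\tr X,N)\to0,
$$
and using that $\tr\tr M\cong M$ and $(\tr M)^\ast\cong\syz^2M$ hold up to free summands, yields
$$
0\to\Ext_R^1(M,N)\to\tr M\otimes_RN\to\Hom_R(\syz^2M,N)\oplus N^{\oplus c}\to\Ext_R^2(M,N)\to0
$$
for some $c\ge0$. As $\Ext_R^1(M,N)=\Ext_R^2(M,N)=0$, this gives $\tr M\otimes_RN\cong\Hom_R(\syz^2M,N)\oplus N^{\oplus c}$, reducing the problem to showing that $\Hom_R(\syz^2M,N)$ is maximal Cohen--Macaulay.

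I would prove the latter by a syzygy chain. For each $k$ the exact sequence $0\to\syz^{k+1}M\to F_k\to\syz^kM\to0$ with $F_k$ free, after applying $\Hom_R(-,N)$, has last nonzero term $\Ext_R^1(\syz^kM,N)\cong\Ext_R^{k+1}(M,N)$; hence for $2\le k\le d-1$ the vanishing of $\Ext_R^{k+1}(M,N)$ produces short exact sequences $0\to\Hom_R(\syz^kM,N)\to N^{\oplus b_k}\to\Hom_R(\syz^{k+1}M,N)\to0$. Splicing them gives an exact sequence
$$
0\to\Hom_R(\syz^2M,N)\to N^{\oplus b_2}\to\cdots\to N^{\oplus b_{d-1}}\to\Hom_R(\syz^dM,N)\to0.
$$
Now $\syz^dM$ is maximal Cohen--Macaulay, because $\depth\syz^kM\ge\min\{k,d\}$; and for any module $Y$ the module $\Hom_R(Y,N)$ is the kernel of a map between maximal Cohen--Macaulay modules, so $\depth\Hom_R(\syz^dM,N)\ge\min\{2,d\}=2$. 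An iterated application of the depth lemma along the displayed sequence (which has $d-2$ free multiples of $N$ in the middle) then forces $\depth\Hom_R(\syz^2M,N)\ge\min\{d,\,2+(d-2)\}=d$; the case $d=2$ is immediate, since then $\syz^2M=\syz^dM$ is itself maximal Cohen--Macaulay. This shows $\Hom_R(\syz^2M,N)$, hence $\tr M\otimes_RN$, hence $M\otimes_RN^\dag$, is maximal Cohen--Macaulay.

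The main obstacle I anticipate is exactly the step that forces the use of the four-term sequence. The transpose presentation $0\to\Hom_R(M,N)\to N^{\oplus m}\to N^{\oplus n}\to\tr M\otimes_RN\to0$ exhibits $\tr M\otimes_RN$ as a \emph{cokernel}, and naive depth counting along it loses one degree, yielding only depth $d-1$. The point is to recognize $\tr M\otimes_RN$ instead as the Hom module $\Hom_R(\syz^2M,N)$, i.e.\ as a \emph{kernel}, and then to raise its depth all the way to $d$. The decisive final unit of depth is supplied not by the Ext-vanishing of $\syz^2M$ alone (which would again only give $d-1$) but by the maximal Cohen--Macaulayness of the top syzygy $\syz^dM$ at the end of the chain.
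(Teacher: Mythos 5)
Your proposal is correct and follows essentially the same route as the paper: both reduce via Theorem \ref{4} (together with $N^{\dag\dag}\cong N$) to showing that $\tr M\otimes_RN$ is maximal Cohen--Macaulay, and both prove this by recognizing $\tr M\otimes_RN$ as a kernel (rather than a cokernel) inside the $\Hom_R(-,N)$-dual of the free resolution of $M$, whose exactness in the relevant range is exactly the Ext vanishing, and then applying the depth lemma repeatedly. The only difference is cosmetic: the paper obtains the kernel identification by directly splicing the transpose presentation with the complex $\Hom_R(F_\bullet,N)$ extended to degree $d+1$, whereas you route it through the Auslander--Bridger four-term sequence applied to $\tr M$ together with $(\tr M)^\ast\cong\syz^2M$, and finish the chain with the depth bound on $\Hom_R(\syz^dM,N)$ instead of two extra terms of the complex.
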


\begin{proof}
Let $F$ be a minimal free resolution of $M$.
There are exact sequences $F_0^\ast\otimes N\to F_1^\ast\otimes N\to\tr M\otimes N\to0$ and $0\to\Hom(M,N)\to\Hom(F_0,N)\to\Hom(F_1,N)$.
Since $F_i^\ast\otimes N\cong\Hom(F_i,N)$ for $i=0,1$, we obtain an exact sequence
$$
\alpha:0\to\Hom(M,N)\to\Hom(F_0,N)\to\Hom(F_1,N)\to\tr M\otimes N\to0.
$$
The assumption that $\Ext_R^i(M,N)=0$ for all $1\le i\le d$ implies that the induced sequence
$$
\beta:0\to\Hom(M,N)\to\Hom(F_0,N)\to\Hom(F_1,N)\to\Hom(F_2,N)\to\cdots\to\Hom(F_{d+1},N)
$$
is exact.
The exact sequences $\alpha,\beta$ induce an exact sequence
$$
0\to\tr M\otimes N\to\Hom(F_2,N)\to\Hom(F_3,N)\to\cdots\to\Hom(F_{d+1},N).
$$
As $\Hom(F_i,N)$ is maximal Cohen--Macaulay for all $2\le i\le d+1$, so is $\tr M\otimes N$ by the depth lemma.
By virtue of Theorem \ref{4} (and Remark \ref{3}), the $R$-module $M\otimes_R N^\dagger$ is maximal Cohen--Macaulay.
\end{proof}

As stated above, the assertion of Corollary \ref{lem1} is the same as \cite[Lemma 3.4(1)]{LM}, and it extends the former half of \cite[Lemma 5.3]{DEL}.
The latter half of \cite[Lemma 5.3]{DEL} asserts that for a Cohen--Macaulay local ring $R$ with a canonical module $\omega$ and maximal Cohen--Macaulay $R$-modules $M,N$, if $\Ext_R^i(M,N)$ have finite length for all $1\le i\le d$ and $M\otimes_RN^\dag$ is maximal Cohen--Macaulay, then $\Ext_R^i(M,N)=0$ for all $1\le i\le d$.
In relation to this, we can show the following proposition.
Note that the proof is direct and independent of Theorem \ref{4}.

\begin{prop}\label{5}
Let $R$ be a $d$-dimensional Cohen--Macaulay local ring with a canonical module $\omega$.
Let $M$ and $N$ be $R$-modules.
Assume that $N$ is maximal Cohen--Macaulay, and $\dim_R\Tor_i^R(M,N)<i$ for all $1\le i\le d$.
Then $M\otimes_RN$ is maximal Cohen--Macaulay if and only if $\Ext_R^i(M,N^\dag)=0$ for all $1\le i\le d$.
\end{prop}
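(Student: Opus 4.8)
The plan is to compute $\Ext_R^i(M,N^\dag)$ by means of a spectral sequence and to compare it with the Ext of $M\otimes_RN$ against the canonical module. First I would fix a minimal free resolution $F_\bullet$ of $M$ and an injective resolution $I^\bullet$ of $\omega$, and consider the double complex $\Hom_R(F_\bullet\otimes_RN,I^\bullet)$; equivalently, work with $R\Hom_R(M\otimes_R^LN,\omega)$. There are two spectral sequences converging to its total cohomology. Since each term $F_q\otimes_RN$ is a finite direct sum of copies of $N$ and $N$ is maximal Cohen--Macaulay, we have $\Ext_R^{>0}(F_q\otimes_RN,\omega)=0$, so one spectral sequence collapses onto the complex $\Hom_R(F_\bullet\otimes_RN,\omega)\cong\Hom_R(F_\bullet,N^\dag)$; hence the abutment in cohomological degree $n$ is $\Ext_R^n(M,N^\dag)$. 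The other spectral sequence has second page
$$E_2^{p,q}=\Ext_R^p(\Tor_q^R(M,N),\omega)\Rightarrow\Ext_R^{p+q}(M,N^\dag).$$

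The heart of the argument is to show that, in total degrees $1\le n\le d$, the only surviving contribution comes from $E_2^{n,0}=\Ext_R^n(M\otimes_RN,\omega)$. By local duality one has $\Ext_R^p(X,\omega)\cong\operatorname{H}_{\m}^{d-p}(X)^\vee$, so $\Ext_R^p(X,\omega)=0$ whenever $p<d-\dim_RX$. Applying this with $X=\Tor_q^R(M,N)$ for $1\le q\le d$ and the hypothesis $\dim_R\Tor_q^R(M,N)<q$ gives $\Ext_R^p(\Tor_q^R(M,N),\omega)=0$ for all $p\le d-q$; that is, $E_2^{p,q}=0$ whenever $q\ge1$ and $p+q\le d$. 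A quick inspection of the differentials $d_r\colon E_r^{n-r,r-1}\to E_r^{n,0}$ shows that their sources sit in total degree $n-1\le d-1$ with $q$-coordinate $r-1\ge1$, hence vanish, while the outgoing differentials from $E_r^{n,0}$ land in negative $q$-degree; thus $E_2^{n,0}=E_\infty^{n,0}$ and it is the unique nonzero graded piece of the abutment for $1\le n\le d$. Therefore $\Ext_R^n(M,N^\dag)\cong\Ext_R^n(M\otimes_RN,\omega)$ for every $1\le n\le d$.

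Finally I would invoke the standard characterization via the canonical module: a module $X$ over the $d$-dimensional Cohen--Macaulay local ring $R$ is maximal Cohen--Macaulay if and only if $\Ext_R^i(X,\omega)=0$ for all $i\ge1$, and since $\Ext_R^i(X,\omega)\cong\operatorname{H}_{\m}^{d-i}(X)^\vee=0$ automatically for $i>d$, it suffices that $\Ext_R^i(X,\omega)=0$ for $1\le i\le d$. Combining this with the isomorphisms just obtained, $M\otimes_RN$ is maximal Cohen--Macaulay if and only if $\Ext_R^i(M\otimes_RN,\omega)=0$ for all $1\le i\le d$, if and only if $\Ext_R^i(M,N^\dag)=0$ for all $1\le i\le d$, as desired. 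The main obstacle is precisely the middle step: one must verify that the dimension hypothesis on the Tor modules, fed through local duality, forces every off-diagonal term and every relevant differential to vanish in the range $1\le n\le d$, so that $\Ext_R^n(M,N^\dag)$ and $\Ext_R^n(M\otimes_RN,\omega)$ may be identified; the degenerate case $M\otimes_RN=0$ (equivalently $M=0$) should be disposed of separately.
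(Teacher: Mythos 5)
Your proof is correct and follows essentially the same route as the paper's: the same spectral sequence $E_2^{p,q}=\Ext_R^p(\Tor_q^R(M,N),\omega)\Rightarrow\Ext_R^{p+q}(M,N^\dag)$ (available since $N$ is maximal Cohen--Macaulay), the same vanishing $E_2^{p,q}=0$ for $q\ge1$, $p+q\le d$ obtained by feeding the hypothesis $\dim_R\Tor_q^R(M,N)<q$ through local duality, and the same conclusion via the characterization of maximal Cohen--Macaulayness by vanishing of $\Ext_R^i(-,\omega)$ for $1\le i\le d$. Your additional checking of the differentials and of the degenerate case merely spells out details the paper leaves implicit.
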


\begin{proof}
As $N$ is maximal Cohen--Macaulay, there exists a spectral sequence $E_2^{p,q}=\Ext^p(\Tor_q(M,N),\omega)\Rightarrow H^{p+q}=\Ext^{p+q}(M,N^\dag)$.
If $1\le q\le d$ and $p+q\le d$, then $d-\dim\Tor_q(M,N)>d-q\ge p$, and $E_2^{p,q}=0$ by \cite[Corollary 3.5.11(a)]{BH}.
Hence $H^i\cong E_2^{i,0}$ for each $0\le i\le d$.
Thus, $\Ext^i(M,N^\dag)=0$ for all $1\le i\le d$ if and only if $\Ext^i(M\otimes N,\omega)=0$ for all $1\le i\le d$, if and only if $M\otimes N$ is maximal Cohen--Macaulay.
\end{proof}

We record a consequence of the combination of Proposition \ref{5} and Theorem \ref{4}.

\begin{cor}\label{a}
Let $(R,\m)$ be a $d$-dimensional Cohen--Macaulay local ring with a canonical module, and let $M,N$ be $R$-modules.
Suppose that $N$ is a maximal Cohen--Macaulay module and $\dim_R\Tor_i^R(M,N^\dag)<i$ for all $1\le i\le d$.
Then $\Ext_R^i(M,N)=0$ for all $1\le i\le d$ if and only if the $R$-module $\tr M\otimes_RN$ is maximal Cohen--Macaulay.
\end{cor}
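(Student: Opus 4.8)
The plan is to deduce the statement as a formal combination of Proposition \ref{5} and Theorem \ref{4}, applied not to $N$ itself but to its canonical dual $N^\dag$, and then to chain the two resulting equivalences through their common middle term. The structural fact underlying everything is Remark \ref{3}: the assignment $X\mapsto X^\dag$ is a duality of $\cm(R)$. In particular, since $N$ is maximal Cohen--Macaulay, so is $N^\dag$, and the biduality built into that statement supplies a natural isomorphism $(N^\dag)^\dag\cong N$.

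First I would feed $N^\dag$ into Proposition \ref{5}. Its hypotheses hold for $N^\dag$: it is maximal Cohen--Macaulay by the remark above, and the assumption $\dim_R\Tor_i^R(M,N^\dag)<i$ for all $1\le i\le d$ is precisely the Tor-dimension condition required there, now read for the module $N^\dag$. Applying Proposition \ref{5} with $N^\dag$ in the role of $N$, and rewriting $(N^\dag)^\dag\cong N$, I obtain
\[
M\otimes_RN^\dag \text{ is maximal Cohen--Macaulay} \iff \Ext_R^i(M,N)=0 \text{ for all } 1\le i\le d.
\]

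Next I would apply Theorem \ref{4}, again with the maximal Cohen--Macaulay module $N^\dag$ in place of $N$. Using $(N^\dag)^\dag\cong N$ once more, the theorem yields
\[
M\otimes_RN^\dag \text{ is maximal Cohen--Macaulay} \iff \tr M\otimes_RN \text{ is maximal Cohen--Macaulay}.
\]
Combining the two displayed biconditionals through the shared condition that $M\otimes_RN^\dag$ be maximal Cohen--Macaulay gives exactly the assertion of the corollary.

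Because the argument is merely a juxtaposition of two previously established equivalences, there is no genuine analytic obstacle; the only point demanding care is the bookkeeping of which module plays the role of $N$ in each cited result after the substitution. Concretely, I would make sure that the single hypothesis $\dim_R\Tor_i^R(M,N^\dag)<i$ is indeed the correct input for Proposition \ref{5} once $N$ is replaced by $N^\dag$, and that the involutivity $(N^\dag)^\dag\cong N$ coming from Remark \ref{3} legitimately converts the Ext and tensor terms back into the modules $N$ and $\tr M\otimes_RN$ appearing in the statement.
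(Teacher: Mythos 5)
Your proposal is correct and is exactly the argument the paper intends: the corollary is stated as "a consequence of the combination of Proposition \ref{5} and Theorem \ref{4}," and your substitution of $N^\dag$ for $N$ in both results, using the duality $(N^\dag)^\dag\cong N$ from Remark \ref{3} and chaining through the condition that $M\otimes_RN^\dag$ be maximal Cohen--Macaulay, is precisely that combination (it also matches the left and bottom equivalences in the diagram of the subsequent remark). No gaps; the Tor hypothesis $\dim_R\Tor_i^R(M,N^\dag)<i$ is indeed the correct input for Proposition \ref{5} after the substitution, just as you verified.
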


Finally, we compare our results with one in \cite{ST}.
For an $R$-module $M$ we denote by $\nf(M)$ the {\em nonfree locus} of $M$, that is, the set of prime ideals $\p$ of $R$ such that the localization $M_\p$ is nonfree over $R_\p$.

\begin{rem}
Let $(R,\m)$ be a $d$-dimensional Cohen--Macaulay local ring with a canonical module.
Let $M$ and $N$ be $R$-modules.
Suppose that $N$ is maximal Cohen--Macaulay, that $M$ is locally totally reflexive on the punctured spectrum of $R$, and that $\nf(M)\cap\nf(N)\subseteq\{\m\}$.
Then it holds that $\Ext_R^i(M,N)^\vee\cong\Ext_R^{(d+1)-i}(\tr M,N^\dag)$ for all $1\le i\le d$ by \cite[Theorem 2.8]{ST}.
We thus obtain the top equivalence in the following diagram, while the other three equivalences follow from our results stated above.
$$
\xymatrix@C+2pc{
\Ext_R^i(M,N)=0\text{ for all }1\le i\le d\ \ar@{<=>}[r]^-{\text{\cite[Theorem 2.8]{ST}}}\ar@{<=>}[d]^{\text{Proposition \ref{5}}}& \ \Ext_R^i(\tr M,N^\dag)=0\text{ for all }1\le i\le d\ar@{<=>}[d]^{\text{Proposition \ref{5}}}\\
M\otimes_RN^\dag\text{ is maximal Cohen--Macaulay\phantom{.}}\ \ar@{<=>}[r]^-{\text{Theorem \ref{4}}}& \ \tr M\otimes_RN\text{ is maximal Cohen--Macaulay.}
}
$$
\end{rem}

\section{Vanishing of Ext modules}

In this section, using what is given in the previous section, we provide various criteria for modules to be projective in terms of vanishing of Ext modules, and recover/refine a lot of results in the literature.

We begin with proving our key proposition, and for this we need a lemma.
For $R$-modules $X$ and $Y$, we define homomorphisms
$$
\alpha_{X,Y} : X\to \Hom_R(Y,Y\otimes_R X),\qquad
\beta_{X,Y} : X\otimes_R \Hom_R(X,Y)\to Y
$$
by $\alpha_{X,Y}(x)(y)=y\otimes x$ and $\beta_{X,Y}(x\otimes f)=f(x)$ for $x\in X$, $y\in Y$ and $f\in \Hom(X,Y)$.

\begin{lem}\label{6}
Let $X$ and $Y$ be $R$-modules.
Then $\beta_{Y,Y\otimes X}$ is surjective and $\alpha_{\Hom(X,Y),X}$ is injective.
\end{lem}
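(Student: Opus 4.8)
The plan is to recognize $\alpha$ and $\beta$ as the unit and counit of the tensor--Hom adjunction $(-\otimes_R X)\dashv\Hom_R(X,-)$, and to extract explicit one-sided inverses from the two triangle identities. Both assertions then reduce to a one-line check on simple tensors, so the real work is only the bookkeeping of the index substitutions so that the sources and targets match.

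For the surjectivity of $\beta_{Y,Y\otimes X}$, I would first specialize the definitions: replacing $(X,Y)$ by $(Y,Y\otimes X)$ in the definition of $\beta$ gives the evaluation map $\beta_{Y,Y\otimes X}\colon Y\otimes\Hom(Y,Y\otimes X)\to Y\otimes X$ sending $y\otimes g$ to $g(y)$, while $\alpha_{X,Y}(x)\in\Hom(Y,Y\otimes X)$ is the homomorphism $y\mapsto y\otimes x$. I would then form the composite $\beta_{Y,Y\otimes X}\circ(1_Y\otimes\alpha_{X,Y})\colon Y\otimes X\to Y\otimes X$ and evaluate on a simple tensor: it sends $y\otimes x$ to $\alpha_{X,Y}(x)(y)=y\otimes x$. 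Since simple tensors generate $Y\otimes X$ and both maps are $R$-linear, this composite is the identity of $Y\otimes X$. Thus $1_Y\otimes\alpha_{X,Y}$ is a right inverse of $\beta_{Y,Y\otimes X}$, which is therefore surjective.

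For the injectivity of $\alpha_{\Hom(X,Y),X}$, replacing $(X,Y)$ by $(\Hom(X,Y),X)$ in the definition of $\alpha$ gives $\alpha_{\Hom(X,Y),X}\colon\Hom(X,Y)\to\Hom(X,\,X\otimes\Hom(X,Y))$ sending $f$ to the homomorphism $x\mapsto x\otimes f$. I would apply the functor $\Hom(X,-)$ to $\beta_{X,Y}\colon X\otimes\Hom(X,Y)\to Y$ and form the composite $\Hom(X,\beta_{X,Y})\circ\alpha_{\Hom(X,Y),X}\colon\Hom(X,Y)\to\Hom(X,Y)$. Evaluating on $f$ and then at $x\in X$ yields $\beta_{X,Y}(x\otimes f)=f(x)$, so the composite is the identity of $\Hom(X,Y)$. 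Hence $\alpha_{\Hom(X,Y),X}$ admits a left inverse and is injective.

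I do not expect any genuine obstacle here: there is no finiteness, depth, or Cohen--Macaulay input involved, and the statement holds over an arbitrary $R$. The only points requiring care are correctly performing the substitutions $(X,Y)\mapsto(Y,Y\otimes X)$ and $(X,Y)\mapsto(\Hom(X,Y),X)$ so that the two composites are well-typed, and noting that the computations on simple tensors extend by $R$-bilinearity. In essence, the two claims are precisely the triangle identities of the adjunction, read as the existence of a right inverse for the counit-type map and a left inverse for the unit-type map.
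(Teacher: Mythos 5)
Your proof is correct and is essentially the paper's own argument: you verify that $\beta_{Y,Y\otimes X}\circ(1_Y\otimes\alpha_{X,Y})$ and $\Hom(X,\beta_{X,Y})\circ\alpha_{\Hom(X,Y),X}$ are the identity maps, which is exactly the splitting (triangle-identity) argument the authors use. The only difference is that you write out the simple-tensor computation that the paper labels ``straightforward to verify.''
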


\begin{proof}
It is straightforward to verify that the compositions
$$
\begin{array}{rl}
&Y\otimes X\xrightarrow{Y\otimes\alpha_{X,Y}} Y\otimes\Hom(Y,Y\otimes X) \xrightarrow{\beta_{Y,Y\otimes X}} Y\otimes X,\\
&\Hom(X,Y)\xrightarrow{\alpha_{\Hom(X,Y),X}}\Hom(X,X\otimes\Hom(X,Y))\xrightarrow{\Hom(X,\beta_{X,Y})}\Hom(X,Y)
\end{array}
$$
are identity maps.
Hence $\beta_{Y,Y\otimes X}$ is a split epimorphism and $\alpha_{\Hom(X,Y),X}$ is a split monomorphism.
\end{proof}

\begin{rem}
Fix an $R$-module $M$, and consider the tensor-hom adjunction $(M\otimes_R-,\Hom_R(M,-))$.
Then the unit and counit are given by $X\mapsto\alpha_{X,M}$ and $Y\mapsto\beta_{M,Y}$.
The above lemma is actually a consequence of a general fact on the unit and counit of an adjunction.
\end{rem}

Let $R$ be a local ring with residue field $k$.
For an $R$-module $M$ we denote by $\mu(M)$ the the {\em minimal number of generators} of $M$, i.e., $\mu(M)=\dim_k(M\otimes_Rk)$.
The next proposition consists of dual statements.
The duality is more visible if we notice $N\ne0\Leftrightarrow k\otimes_RN\ne0$ and $\depth_RN=0\Leftrightarrow\Hom_R(k,N)\ne0$.
The first assertion of the proposition plays a key role in the proof of the main result of this section.

\begin{prop}\label{lhom van}
Let $R$ be local and $M,N$ be $R$-modules.
Then $M$ is free in each of the two cases below.\\
{\rm(1)} $N\ne0$ and $\Tor_1^R(\tr M,M\otimes_RN)=0$.\qquad
{\rm(2)} $\depth_RN=0$ and $\Ext_R^1(\tr M,\Hom_R(M,N))=0$.
\end{prop}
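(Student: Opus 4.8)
The plan is to convert each vanishing hypothesis into an exactness statement about a natural map attached to $M$, and then play this off against Lemma \ref{6}. The starting point is the defining four-term exact sequence of the transpose: if $F_1\xrightarrow{\partial}F_0\to M\to0$ is a minimal free presentation, then dualizing by $R$ gives $0\to M^\ast\to F_0^\ast\xrightarrow{\partial^\ast}F_1^\ast\to\tr M\to0$. Breaking this into two short exact sequences, tensoring with $L$ (resp. applying $\Hom_R(-,L)$), and using the natural isomorphisms $F_i^\ast\otimes_RL\cong\Hom_R(F_i,L)$ and $\Hom_R(F_i^\ast,L)\cong F_i\otimes_RL$, I expect to obtain for every $R$-module $L$ natural identifications
$$
\Tor_1^R(\tr M,L)\cong\cok\bigl(\theta_{M,L}:M^\ast\otimes_RL\to\Hom_R(M,L)\bigr),\qquad
\Ext_R^1(\tr M,L)\cong\ker\bigl(\xi_{M,L}:M\otimes_RL\to\Hom_R(M^\ast,L)\bigr),
$$
where $\theta_{M,L}(f\otimes\ell)(m)=f(m)\ell$ and $\xi_{M,L}(m\otimes\ell)(f)=f(m)\ell$. (The case $L=R$ recovers $\Tor_1^R(\tr M,R)=0$ and $\Ext_R^1(\tr M,R)\cong\ker(M\to M^{\ast\ast})$, which is reassuring.) Establishing these two isomorphisms correctly, and checking that $\theta$ and $\xi$ are exactly the maps induced from the transpose sequence, is the part needing the most care and is the main obstacle; everything afterward is formal.

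For (1) I would set $L=M\otimes_RN$. The hypothesis $\Tor_1^R(\tr M,M\otimes_RN)=0$ makes $\theta_{M,M\otimes N}$ surjective, while Lemma \ref{6} makes $\beta_{M,M\otimes N}$ surjective. Tensoring $\theta_{M,M\otimes N}$ with $M$ and composing with $\beta_{M,M\otimes N}$ then yields a surjection $M\otimes_RM^\ast\otimes_RM\otimes_RN\to M\otimes_RN$ carrying $m_0\otimes f\otimes m_1\otimes n$ to $f(m_0)(m_1\otimes n)$. Its image is $\mathfrak a\,(M\otimes_RN)$, where $\mathfrak a=\sum_{f\in M^\ast}f(M)$ is the trace ideal of $M$; hence $\mathfrak a\,(M\otimes_RN)=M\otimes_RN$. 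Since $M,N\neq0$ force $M\otimes_RN\neq0$, Nakayama gives $\mathfrak a=R$, so the evaluation $M^\ast\otimes_RM\to R$ hits a unit and $R$ is a direct summand of $M$. Writing $M=R\oplus M'$, I note $\tr M\cong\tr M'$ and $M\otimes_RN\cong N\oplus(M'\otimes_RN)$, whence $\Tor_1^R(\tr M',M'\otimes_RN)=0$; an induction on $\mu(M)$ then finishes.

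Part (2) is the exact dual, with cokernel/surjective/Nakayama replaced by kernel/injective/socle. Setting $L=\Hom_R(M,N)$, the hypothesis $\Ext_R^1(\tr M,\Hom_R(M,N))=0$ makes $\xi_{M,\Hom(M,N)}$ injective, and Lemma \ref{6} makes $\alpha_{\Hom(M,N),M}$ injective. Applying $\Hom_R(M,-)$ to $\xi_{M,\Hom(M,N)}$ and composing with $\alpha_{\Hom(M,N),M}$ gives an injection $\Hom_R(M,N)\to\Hom_R(M,\Hom_R(M^\ast,\Hom_R(M,N)))$ whose value at $\phi$ vanishes exactly when $\mathfrak a\cdot\image\phi=0$. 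Assuming $M\neq0$ and $\mathfrak a\subseteq\m$, the condition $\depth_RN=0$ supplies a nonzero element of $N$ killed by $\m$, hence a nonzero $\phi:M\to N$ with image in the socle of $N$; then $\mathfrak a\cdot\image\phi\subseteq\m\cdot\image\phi=0$, contradicting injectivity. Therefore $\mathfrak a=R$, so $R$ is a direct summand of $M$, and the same induction on $\mu(M)$ (now using $\Hom_R(M,N)\cong N\oplus\Hom_R(M',N)$) shows $M$ is free. Thus the only genuinely delicate point is the two transpose identifications; the remainder is a trace-ideal computation together with Nakayama (for (1)) or its socle-theoretic counterpart (for (2)), plus a routine splitting-and-induction argument identical in form for the two parts.
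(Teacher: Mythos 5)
Your proposal is correct and follows essentially the same route as the paper's own proof: the two transpose identifications you single out are precisely the standard facts the paper cites (\cite[Lemmas (3.8) and (3.9)]{YO} for the cokernel description of $\Tor_1^R(\tr M,-)$ and \cite[Proposition (2.6)(a)]{AB} for the kernel description of $\Ext_R^1(\tr M,-)$), and the remainder --- composing with the maps of Lemma \ref{6}, reducing everything to the trace ideal $\mathfrak a=\image(M\otimes_RM^\ast\to R)$, concluding $\mathfrak a=R$ via Nakayama in case (1) and via a depth-zero obstruction in case (2), then splitting off $R$ and inducting on $\mu(M)$ --- is the paper's argument in the same order. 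The only cosmetic differences are that you derive the identifications rather than cite them, and that in (2) you replace the paper's associated-primes computation $\ass\Hom_R(R/I,\Hom_R(M,N))=\V(I)\cap\supp M\cap\ass N$ by an explicit nonzero homomorphism $M\to N$ landing in the socle of $N$, which is the same fact made concrete.
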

 
\begin{proof}
Let $\lambda:M\otimes M^\ast\to R$ be the homomorphism given by $\lambda(x\otimes f)=f(x)$ for $x\in M$ and $f\in M^\ast$.
Set $I=\image\lambda$.
There is an exact sequence $M\otimes M^\ast\xrightarrow{\lambda}R\to R/I\to0$.
We prove the proposition by induction on $\mu(M)$.
If $\mu(M)=0$, then $M=0$ and $M$ is free.
Let $\mu(M)>0$.
It suffices to show $I=R$.
Indeed, it means that $\lambda$ is surjective, which is equivalent to saying that $R$ is isomorphic to a direct summand of $M$; see \cite[Proposition 2.8(iii)]{L} for instance.
Then there is an $R$-module $M^\prime$ such that $M \cong R\oplus M^\prime$.
Note that $\mu(M^\prime)<\mu(M)$, and that $\Tor_1(\tr M^\prime ,M^\prime\otimes N)$ and $\Ext^1(\tr M',\Hom(M',N))$ are direct summands of $\Tor_1(\tr M,M\otimes N)$ and $\Ext^1(\tr M,\Hom(M,N))$, respectively.
The induction hypothesis implies that $M^\prime$ is free, and so is $M$.
Thus, from now on, we prove $I=R$ in each of the two cases in the proposition.

(1) Set $L = M\otimes N$.
As $M\ne0\ne N$, we have $L\ne0$.
Lemma \ref{6} shows the surjectivity of $\beta_{M,L}$.
As $\Tor_1(\tr M,L)=0$, the map $\varpi:M^\ast \otimes L\to\Hom(M,L)$ given by $\varpi(f\otimes z)(x)=f(x)z$ for $f\in M^\ast$, $z\in L$ and $x\in M$ is surjective by \cite[Lemmas (3.8) and (3.9)]{YO}.
We see that the composition
$$
M\otimes M^\ast\otimes L\xrightarrow{M\otimes\varpi}M\otimes\Hom(M,L)\xrightarrow{\beta_{M,L}}L
$$
coincides with the map $\lambda\otimes L:M\otimes M^\ast\otimes L\to R\otimes L\cong L$.
As $M\otimes\varpi$ and $\beta_{M,L}$ are surjective, so is $\lambda\otimes L$.
Hence $R/I\otimes_RL=0$ and $L=IL$.
Since $L\ne0$, Nakayama's lemma implies $I=R$.

(2) Set $L=\Hom(M,N)$.
As $M\ne0$ and $\depth N=0$, we have $L\ne0$ by \cite[Exercises 1.2.27]{BH}.
Lemma \ref{6} shows the injectivity of $\alpha_{L,M}$.
As $\Ext^1(\tr M,L)=0$, the map $\varpi:M\otimes L\to\Hom(M^\ast,L)$ given by $\varpi(x\otimes z)(f)=f(x)z$ for $f\in M^\ast$, $z\in L$ and $x\in M$ is injective by \cite[Proposition (2,6)(a)]{AB}.
The composition below coincides with the map $\Hom(\lambda,L):L\cong\Hom(R,L)\to\Hom(M\otimes M^\ast,L)$.
$$
L\xrightarrow{\alpha_{L,M}}\Hom(M,M\otimes L)\xrightarrow{\Hom(M,\varpi)}\Hom(M,\Hom(M^\ast,L))\cong\Hom(M\otimes M^\ast,L).
$$
As $\Hom(M,\varpi),\alpha_{L,M}$ are injective, so is $\Hom(\lambda,L)$.
Thus $0=\Hom(R/I,L)\cong\Hom(R/I,\Hom(M,N))$.
By \cite[Exercises 1.2.27]{BH} again, we get equalities $\emptyset=\ass\Hom(R/I,\Hom(M,N))=\V(I)\cap\supp M\cap\ass N$.
Since the maximal ideal $\m$ belongs to the set $\supp M\cap\ass N$, we obtain the equality $I=R$.
\end{proof}

\begin{rem}
In the proof of the above proposition, $I=\image\lambda$ is by definition the {\em trace ideal} of the $R$-module $M$.
Trace ideals have been investigated in relation to the Auslander--Reiten conjecture in several works of Lindo and her collaborators; see \cite{L2} for instance.
\end{rem}

Next we state a lemma which is frequently used later.

\begin{lem}\label{Ext and Tor}
Let $R$ be a local ring of dimension $d$.
Let $M$ and $N$ be $R$-modules.
\begin{enumerate}[\rm(1)]
\item
If $\pd_RM<\infty$ and $N\ne0$, then $\sup\{i\in\N\mid\Ext_R^i(M,N)\ne0\}=\pd_RM=\depth R-\depth_RM$.
\item
Assume that $R$ is Cohen--Macaulay and has a canonical module $\omega$.
Suppose that $M$ is locally free on the punctured spectrum of $R$ and that $N$ is maximal Cohen--Macaulay.
Then there is an isomorphism $\Ext_R^{d+i}(M,N^\dag)^\vee\cong\Tor_i^R(M,N)$ for every integer $i>0$.
\end{enumerate}
\end{lem}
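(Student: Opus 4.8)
The plan is to treat the two parts independently: part (1) is a standard homological fact about modules of finite projective dimension, whereas part (2) will be extracted from the spectral sequence already constructed in the proof of Proposition \ref{5}. For part (1), the equality $\pd_RM=\depth R-\depth_RM$ is precisely the Auslander--Buchsbaum formula, valid since $\pd_RM<\infty$. For the first equality, set $p=\pd_RM$ and take a minimal free resolution $F$ of $M$. The vanishing $\Ext_R^i(M,N)=0$ for $i>p$ is clear from the length of $F$, so the task is to verify $\Ext_R^p(M,N)\ne0$. I would observe that $\Ext_R^p(M,N)$ is the cokernel of $\Hom(\partial_p,N)\colon\Hom(F_{p-1},N)\to\Hom(F_p,N)$; since $F$ is minimal, the entries of $\partial_p$ lie in $\m$, so the image of this map is contained in $\m\Hom(F_p,N)$. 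Therefore $\Ext_R^p(M,N)$ surjects onto $\Hom(F_p,N)\otimes_Rk\cong(N\otimes_Rk)^{\oplus\beta}$ with $\beta=\operatorname{rank}F_p\ge1$, which is nonzero by Nakayama's lemma because $N\ne0$.

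For part (2), I would invoke the spectral sequence $E_2^{p,q}=\Ext_R^p(\Tor_q^R(M,N),\omega)\Rightarrow H^{p+q}=\Ext_R^{p+q}(M,N^\dag)$ from Proposition \ref{5}. Two inputs drive the computation. First, since $M$ is locally free on the punctured spectrum, for each $q>0$ the module $\Tor_q^R(M,N)$ is supported only at $\m$ and hence has finite length. Second, for a finite length module $T$ over our $d$-dimensional Cohen--Macaulay local ring, local duality gives $\Ext_R^p(T,\omega)=0$ for $p\ne d$ together with $\Ext_R^d(T,\omega)\cong T^\vee$. Consequently the only column with $q>0$ that can contribute is $p=d$, where $E_2^{d,q}\cong\Tor_q^R(M,N)^\vee$; moreover the bottom row satisfies $E_2^{p,0}=\Ext_R^p(M\otimes_RN,\omega)=0$ for $p>d$ because $\operatorname{id}_R\omega=d$.

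Fixing $i>0$ and examining total degree $d+i$, I would check that $E_2^{d,i}\cong\Tor_i^R(M,N)^\vee$ is the unique potentially nonzero entry on that diagonal: any entry with $q>0$ forces $p=d$ hence $q=i$, while the only entry with $q=0$ is $E_2^{d+i,0}=0$. A short bidegree count then shows that every differential into or out of the $(d,i)$-spot has a vanishing source or target, so $E_2^{d,i}=E_\infty^{d,i}$ and the abutment collapses to $\Ext_R^{d+i}(M,N^\dag)\cong\Tor_i^R(M,N)^\vee$. Applying the Matlis dual and using that $\Tor_i^R(M,N)$, being of finite length, is reflexive under $(-)^\vee$, I obtain $\Ext_R^{d+i}(M,N^\dag)^\vee\cong\Tor_i^R(M,N)$. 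The main obstacle is this final diagonal bookkeeping---confirming that all the relevant differentials vanish so the spectral sequence degenerates at the $(d,i)$ position---but with the finite-length and injective-dimension facts in hand it reduces to a routine check of bidegrees.
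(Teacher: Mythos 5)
Your proof is correct and follows essentially the same route as the paper: part (1) is the Auslander--Buchsbaum formula together with the standard minimal-resolution argument for nonvanishing of $\Ext_R^{\pd M}(M,N)$ (which the paper simply cites from Matsumura), and part (2) is exactly the paper's spectral sequence argument, with the finite-length observation, the collapse to $H^{d+i}\cong E_2^{d,i}$, and local duality plus Matlis reflexivity of finite length modules spelled out in more detail.
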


\begin{proof}
(1) The assertion follows by \cite[Page 154, Lemma 1(iii)]{Mat} and the Auslander--Buchsbaum formula.

(2) Similarly as in the proof of Proposition \ref{5}, a spectral sequence $E_2^{p,q}=\Ext^p(\Tor_q(M,N),\omega)\Rightarrow H^{p+q}=\Ext^{p+q}(M,N^\dag)$ exists.
The $R$-module $\Tor_q(M,N)$ has finite length if $q>0$.
We have $E_2^{p,q}=0$ if $p>d$ or if $p<d$ and $q>0$.
Thus $H^{d+i}\cong E_2^{d,i}$ for $i\ge0$.
The assertion follows by \cite[Corollary 3.5.9]{BH}.
\end{proof}

For an integer $n\ge0$, we denote by $\x^n(R)$ the set of prime ideals of $R$ with height at most $n$.
For a property $\P$ of local rings (resp. of modules over a local ring) and a subset $X$ of $\spec R$, we say that $R$ (resp. an $R$-module $M$) {\em locally satisfies $\P$ on $X$} if the local ring $R_\p$ (resp. the $R_\p$-module $M_\p$) satisfies $\P$ for every $\p\in X$.
The following theorem is one of the main results of this paper.

\begin{thm}\label{them1}
Let $R$ be a Cohen--Macaulay local ring of dimension $d\ge2$, and let $1\le n\le d-1$ be an integer.
Let $M$ be an $R$-module which locally has finite projective dimension on $\x^n(R)$.
Let $N$ be a maximal Cohen--Macaulay $R$-module with $\supp N=\spec R$.
Suppose that $\Ext_R^i(M,N)=0$ for all $1\le i\le d$ and $\Ext_R^j(M^\ast,\Hom_R(M,N))=0$ for all $n\le j\le d-1$.
Then $M$ is a free $R$-module.
\end{thm}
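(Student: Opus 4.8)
The plan is to deduce freeness of $M$ from Proposition \ref{lhom van}(1), applied not to $N$ itself but to its canonical dual $N^\dag$: since $\supp N=\spec R$ we have $N\ne0$, hence $N^\dag\ne0$, so it will suffice to prove $\Tor_1^R(\tr M,M\otimes_RN^\dag)=0$. The route to this vanishing passes through the Ext--Tor duality of Lemma \ref{Ext and Tor}(2), which requires $\tr M$ (equivalently $M$) to be locally free on the punctured spectrum. I would therefore split the argument into two stages: first establish that $M$ is locally free on the punctured spectrum, and then carry out the duality computation at the closed point.

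For the first stage I would induct on $d=\dim R$, showing $M_\p$ is free for every nonmaximal $\p$, say of height $h<d$. All hypotheses localize: $N_\p$ is maximal Cohen--Macaulay with full support, $M_\p$ has finite projective dimension on $\x^n(R_\p)$, and the two Ext-vanishing conditions restrict to the corresponding ranges over $R_\p$. If $h\le n$, then $\pd_{R_\p}M_\p<\infty$, and since $\Ext_{R_\p}^i(M_\p,N_\p)=0$ for $1\le i\le h$ with $N_\p\ne0$, Lemma \ref{Ext and Tor}(1) together with the Auslander--Buchsbaum formula (giving $\pd_{R_\p}M_\p\le\depth R_\p=h$) forces $\pd_{R_\p}M_\p=0$. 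If $n+1\le h\le d-1$, then $R_\p$ is Cohen--Macaulay of dimension $h\ge2$ with $1\le n\le h-1$, and the localized hypotheses are exactly those of the theorem for $R_\p$; the induction hypothesis then gives that $M_\p$ is free. This is where the full index range $n\le j\le d-1$ in the hypothesis is consumed: the top index $d-1$ is reserved for the closed-point step below, while the lower indices feed the recursive calls at the various localizations.

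For the second stage I would first note, using Corollary \ref{lem1} on the hypothesis $\Ext^i(M,N)=0$ $(1\le i\le d)$, that $M\otimes_RN^\dag$ is maximal Cohen--Macaulay. The key observation is the Hom--tensor adjunction $\Hom(M,N)\cong\Hom(M,N^{\dag\dag})\cong\Hom(M\otimes_RN^\dag,\omega)=(M\otimes_RN^\dag)^\dag$, valid because $N\cong N^{\dag\dag}$ for maximal Cohen--Macaulay $N$ (Remark \ref{3}); in particular $\Hom(M,N)$ is itself maximal Cohen--Macaulay. Now apply Lemma \ref{Ext and Tor}(2) with first argument $\tr M$ (locally free on the punctured spectrum by the first stage) and second argument the maximal Cohen--Macaulay module $M\otimes_RN^\dag$, whose canonical dual is $\Hom(M,N)$. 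For $i=1$ this yields $\Ext_R^{d+1}(\tr M,\Hom(M,N))^\vee\cong\Tor_1^R(\tr M,M\otimes_RN^\dag)$. Finally, using $M^\ast\cong\syz^2\tr M$ I can rewrite $\Ext_R^{d+1}(\tr M,\Hom(M,N))\cong\Ext_R^{d-1}(M^\ast,\Hom(M,N))$, which vanishes by the second hypothesis at $j=d-1$ (here $d\ge2$ makes the degree shift legitimate and $n\le d-1$ makes the index admissible). Hence $\Tor_1^R(\tr M,M\otimes_RN^\dag)=0$, and Proposition \ref{lhom van}(1) gives that $M$ is free.

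The main obstacle, and the crux of the whole argument, is the second stage: one must recognize that $\Hom(M,N)$ is the canonical dual of the maximal Cohen--Macaulay module $M\otimes_RN^\dag$, so that Lemma \ref{Ext and Tor}(2) applies and lands precisely on $\Tor_1^R(\tr M,M\otimes_RN^\dag)$ --- the exact module Proposition \ref{lhom van}(1) requires --- while the degree shift $M^\ast\cong\syz^2\tr M$ matches the single admissible index $j=d-1$ of the second hypothesis to the index $d+1$ produced by the duality. A secondary point to get right is the bookkeeping in the induction, ensuring the localized index ranges stay within $[n,d-1]$ and that the two cases $h\le n$ and $h\ge n+1$ cover every nonmaximal prime.
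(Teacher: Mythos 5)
Your argument is, in substance, the paper's own: your stage 1 (local freeness on the punctured spectrum, via Lemma \ref{Ext and Tor}(1) at primes of height at most $n$ and via recursion at the remaining nonmaximal primes) and your stage 2 (Corollary \ref{lem1}, the identification $\Hom_R(M,N)\cong(M\otimes_RN^\dag)^\dag$, Lemma \ref{Ext and Tor}(2) applied to $\tr M$ and the maximal Cohen--Macaulay module $M\otimes_RN^\dag$, the shift coming from $M^\ast\cong\syz^2\tr M$, and finally Proposition \ref{lhom van}(1)) reproduce exactly the paper's steps, with the same chain of isomorphisms at the closed point; that you organize the reduction as an induction on $\dim R$ rather than as an ascending induction on the height $t$ is immaterial.

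There is, however, a genuine gap: your entire proof presupposes that $R$ admits a canonical module $\omega$, whereas the theorem makes no such assumption, and a Cohen--Macaulay local ring need not possess one. Without $\omega$ the modules $N^\dag$ and $M\otimes_RN^\dag$ are not even defined, and Corollary \ref{lem1} and Lemma \ref{Ext and Tor}(2) --- the two engines of your stage 2 --- are unavailable. The problem also propagates through your stage-1 induction, since the recursive calls at primes of height between $n+1$ and $d-1$ again require stage 2 over $R_\p$. The paper closes this gap with a separate final step: pass to the $\m$-adic completion $\widehat R$, which always has a canonical module; verify that the hypotheses ascend (using $\height(P\cap R)\le\height P$ for $P\in\spec\widehat R$, so that finiteness of projective dimension on $\x^n(\widehat R)$ is inherited, together with flat base change for the Ext vanishing and $\supp\widehat N=\spec\widehat R$); and then descend freeness from $\widehat M$ to $M$ by faithful flatness. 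Once this reduction is inserted at the outset, all localizations occurring in your induction inherit canonical modules from $\widehat R$, and both of your stages run as written.
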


\begin{proof}
We prove the theorem step by step.

(1) We claim that $M$ is locally free on $\x^n(R)$.
In fact, for every $\p\in\x^n(R)$ and every integer $1\le i\le n$ we have $\pd_{R_\p}M_\p<\infty$, $N_\p\ne0$ and $\Ext_{R_\p}^i(M_\p,N_\p)=0$.
The claim follows from Lemma \ref{Ext and Tor}(1).

(2) We consider the case where $R$ possesses a canonical module $\omega$.

(i) First we deal with the case $n=d-1$.
In this case, because of (1), the $R$-module $M$ is locally free on the punctured spectrum of $R$.
It follows from Corollary \ref{lem1} that the $R$-module $M\otimes N^\dag$ is maximal Cohen--Macaulay, and hence so is $(M\otimes N^\dag)^\dag\cong\Hom_R(M,N)$.
There are isomorphisms
$$
\begin{array}{l}
0=\Ext^{d-1}(M^\ast,\Hom(M,N))^\vee
\cong\Ext^{d+1}(\tr M,\Hom(M,N))^\vee\\
\phantom{0=\Ext^{d-1}(M^\ast,\Hom(M,N))^\vee}
\cong\Tor_1(\tr M,\Hom(M,N)^\dag)
\cong\Tor_1(\tr M,M\otimes N^\dag).
\end{array}
$$
Here, the first isomorphism follows from the isomorphism $M^\ast\cong\syz^2 \tr M$ up to free summands, the second from Lemma \ref{Ext and Tor}(2), and the third from the isomorphisms $\Hom(M,N)^\dag\cong(M\otimes N^\dag)^{\dag\dag}\cong M\otimes N^\dag$.
By virtue of Proposition \ref{lhom van}(1), we conclude that $M$ is a free $R$-module.

(ii) Next, we handle the case where $n$ is general.
Fix an integer $t\ge n$.
We prove by induction on $t$ that $M$ is locally free on $\x^t(R)$.
We know that this holds true for $t=n$.
Let $t>n$ and assume that $M$ is locally free on $\x^{t-1}(R)$.
Pick $\p\in\x^t(R)$.
What we want to show is that $M_\p$ is $R_\p$-free, and for this we may assume $\height\p=t$.
Then, $R_\p$ is a Cohen--Macaulay local ring of dimension $t\ge2$, and $(\omega_R)_\p$ is a canonical module of $R_\p$.
The $R_\p$-module $M_\p$ is locally free on $\x^{t-1}(R_\p)$.
The $R_\p$-module $N_\p$ is maximal Cohen--Macaulay and with $\supp_{R_\p}N_\p=\spec R_\p$.
We have $\Ext_{R_\p}^i(M_\p,N_\p)=0$ for every $1\le i\le t$ and $\Ext_{R_\p}^{t-1}(\Hom_{R_\p}(M_\p,R_\p),\Hom_{R_\p}(M_\p,N_\p))=0$.
Therefore, $M_\p$ is $R_\p$-free by (i).
We have shown that $M$ is locally free on $\x^t(R)$ for all $t\ge n$.
Hence $M$ is locally free on $\x^d(R)$, which means that $M$ is $R$-free.

(3) Finally, we address the general case.
Denote by $\widehat{(-)}$ the $\m$-adic completion.
Let $P\in\spec\widehat R$ and $\p=P\cap R\in\spec R$.
Then the rings $\widehat R$ and $\widehat{R}_P$ are faithfully flat over $R$ and $R_\p$, respectively.
Since $\height P=\height\p+\height P/\p\widehat R$ by \cite[Theorem 15.1]{Mat}, it holds that if $P\in\x^n(\widehat R)$, then $\p\in\x^n(R)$.
Each $R$-module $X$ satisfies $\widehat{X}_P\cong X_\p\otimes_{R_\p}\widehat{R}_P$.
Now it is observed that the conditions on $R,M,N$ are satisfied by $\widehat R,\widehat M,\widehat N$.
Since $\widehat R$ has a canonical module, we can apply (2) to see that $\widehat M$ is $\widehat R$-free.
Consequently, $M$ is $R$-free.
\end{proof}

\begin{ques}
As is seen, Proposition \ref{lhom van}(1) plays a crucial role in the proof of Theorem \ref{them1}.
Can we prove something similar (possibly a statement dual to the theorem) by using Proposition \ref{lhom van}(2)?
\end{ques}

The result below is a direct corollary of the above theorem.

\begin{cor}\label{11}
Let $R$ be a Cohen--Macaulay local ring of dimension $d\ge2$ with a canonical module $\omega$.
Let $1\le n\le d-1$.
Let $M$ be a maximal Cohen--Macaulay $R$-module which is locally of finite projective dimension on $\x^n(R)$.
If $\Ext_R^i(M^\ast,M^\dag)=0$ for every $n\le i\le d-1$, then $M$ is free.
\end{cor}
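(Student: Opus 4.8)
The plan is to deduce the corollary directly from Theorem \ref{them1} by specializing the module $N$ to the canonical module $\omega$. The point of this choice is that $\Hom_R(M,\omega)$ is by definition $M^\dag$, so the corollary's hypothesis $\Ext_R^i(M^\ast,M^\dag)=0$ for $n\le i\le d-1$ is literally the second vanishing condition $\Ext_R^j(M^\ast,\Hom_R(M,N))=0$ demanded by Theorem \ref{them1}. Consequently most of the argument reduces to matching notation and verifying that the remaining hypotheses of that theorem become automatic when $N=\omega$.

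First I would check that $\omega$ is an admissible choice for $N$. The canonical module is maximal Cohen--Macaulay, and since $\omega_\p$ is a canonical module of $R_\p$ for every prime $\p$ it is nonzero at every prime, so $\supp_R\omega=\spec R$. The hypothesis that $M$ locally has finite projective dimension on $\x^n(R)$ is unchanged, and $1\le n\le d-1$ with $d\ge2$ is assumed. The only statement not already listed among the corollary's hypotheses is the first vanishing condition $\Ext_R^i(M,\omega)=0$ for all $1\le i\le d$, and this is exactly the step where the maximal Cohen--Macaulay assumption on $M$ is used: for a maximal Cohen--Macaulay module over a Cohen--Macaulay local ring with canonical module, duality into $\omega$ is exact and all higher Ext modules into $\omega$ vanish, giving $\Ext_R^i(M,\omega)=0$ for every $i>0$.

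With all hypotheses of Theorem \ref{them1} verified for $N=\omega$, I would conclude immediately that $M$ is free. Because the proof is a pure specialization of the previously established theorem, I do not expect a genuine obstacle; the hard part will simply be recording cleanly the standard fact $\Ext_R^{>0}(M,\omega)=0$ for maximal Cohen--Macaulay $M$, which is precisely the juncture at which the maximal Cohen--Macaulay hypothesis in the corollary is consumed.
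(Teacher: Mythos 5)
Your proposal is correct and coincides with the paper's own proof: both specialize Theorem \ref{them1} to $N=\omega$, using that $\omega$ is maximal Cohen--Macaulay with $\supp\omega=\spec R$ and that $\Ext_R^{>0}(M,\omega)=0$ for maximal Cohen--Macaulay $M$, so that the remaining hypothesis of the theorem is literally the corollary's vanishing assumption. The only cosmetic difference is how full support of $\omega$ is justified (the paper cites $\Hom_R(\omega,\omega)\cong R$, you cite localization of canonical modules), which is immaterial.
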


\begin{proof}
As $\Hom_R(\omega,\omega)\cong R$, we have $\supp\omega=\spec R$.
Since $M$ is maximal Cohen--Macaulay, we have $\Ext^{>0}(M,\omega)=0$.
Applying Theorem \ref{them1} to $N=\omega$, we see that the assertion holds.
\end{proof}

Let $m,n \in\N\cup\{\infty\}$.
By $\G_{m,n}$ we denote the full subcategory of $\mod R$ consisting of modules $M$ such that $\Ext^i_R(M,R)=0$ for all $1\le i\le m$ and $\Ext^j_R(\tr M,R)=0$ for all $1\le j\le n$.
Applying Theorem \ref{them1}, we obtain the following result, which is also one of the main results of this paper.

\begin{thm}\label{k1}
Let $R$ be a Cohen--Macaulay local ring of dimension $d\ge2$, and let $1\le n\le d-1$ be an integer.
Let $M$ be an $R$-module which locally has finite projective dimension on $\x^n(R)$ and which satisfies $\Ext^j_R(M,M)=0$ for all $n \le j \le d-1$.
Then $M$ is free in each of the following two cases.\\
{\rm(1)} $M\in\G_{2d-t+1,t}$ for some integer $0\le t\le d+2$.\qquad
{\rm(2)} $\Ext^i_R(M,R)=0$ for all integers $1 \le i \le 2d+1$.
\end{thm}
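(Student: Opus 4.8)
The plan is to deduce both assertions from Theorem \ref{them1} applied to the maximal Cohen--Macaulay module $N=R$, which has full support. For this choice $\Hom_R(M,R)=M^\ast$, so the two hypotheses of Theorem \ref{them1} become $\Ext_R^i(M,R)=0$ for $1\le i\le d$ and $\Ext_R^j(M^\ast,M^\ast)=0$ for $n\le j\le d-1$. The first thing I would observe is that (2) is the special case $t=0$ of (1): the vanishing $\Ext_R^i(M,R)=0$ for $1\le i\le 2d+1$ is precisely the statement $M\in\G_{2d+1,0}$, since the transpose condition defining $\G_{2d+1,0}$ is vacuous. Hence it suffices to treat (1), so from now on write $M\in\G_{2d-t+1,t}$ with $0\le t\le d+2$.

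Next I would assemble the input data. Because $2d-t+1\ge d-1\ge n$ for every admissible $t$, we get $\Ext_R^i(M,R)=0$ for $1\le i\le n$; combined with the local finiteness of projective dimension on $\x^n(R)$ and Lemma \ref{Ext and Tor}(1), this shows $M$ is locally free on $\x^n(R)$, so that $M^\ast$, $\tr M$, and all the $\Ext$/$\Tor$ modules entering the argument are supported in codimension $>n$. The first hypothesis of Theorem \ref{them1}, namely $\Ext_R^i(M,R)=0$ for $1\le i\le d$, is then immediate from $M\in\G_{2d-t+1,t}$ as long as $2d-t+1\ge d$, that is $t\le d+1$. Only the extreme case $t=d+2$ is not covered directly: there the deep transpose vanishing $\Ext_R^j(\tr M,R)=0$ for $1\le j\le d+2$ forces $M$ to be a high syzygy module, hence maximal Cohen--Macaulay, and the single missing vanishing $\Ext_R^d(M,R)=0$ must be recovered by a separate depth/duality argument exploiting this together with local freeness on $\x^n(R)$.

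The heart of the proof is the second hypothesis, $\Ext_R^j(M^\ast,M^\ast)=0$ for $n\le j\le d-1$, which I would transfer from the assumed vanishing $\Ext_R^j(M,M)=0$ on the same range by means of a duality between $M$ and $M^\ast$. Using $M^\ast\cong\syz^2\tr M$ up to free summands, dimension shifting gives $\Ext_R^j(M^\ast,M^\ast)\cong\Ext_R^{j+2}(\tr M,M^\ast)$ for $j\ge1$. Then I would invoke the derived Hom--tensor adjunction $R\Hom(\tr M,M^\ast)\simeq R\Hom(\tr M\otimes^{L}M,R)\simeq R\Hom(M,(\tr M)^\ast)$, which is valid in the relevant cohomological degrees precisely once $\Ext_R^{>0}(M,R)$ and $\Ext_R^{>0}(\tr M,R)$ vanish in a sufficient range — exactly the two indices supplied by $\G_{2d-t+1,t}$; this produces $\Ext_R^{j+2}(\tr M,M^\ast)\cong\Ext_R^{j+2}(M,(\tr M)^\ast)$. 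Since $(\tr M)^\ast\cong\syz^2 M$ up to free summands, a further dimension shift (again using $\Ext_R(M,R)=0$ in the range) returns $\Ext_R^{j+2}(M,(\tr M)^\ast)\cong\Ext_R^j(M,M)=0$, as desired.

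The hard part will be the index bookkeeping along this chain. A naive reading of the adjunction step would demand $\Ext_R^i(M,R)=0$ and $\Ext_R^i(\tr M,R)=0$ each up to degree $d+1$, a total budget of $2d+2$, which exceeds the $2d+1=(2d-t+1)+t$ that $\G_{2d-t+1,t}$ actually provides. Closing this one-degree gap is where the local finiteness of projective dimension on $\x^n(R)$ becomes indispensable: it confines every error term in the comparison $\Ext_R(\tr M,M^\ast)\leftrightarrow R\Hom(\tr M\otimes^{L}M,R)\leftrightarrow\Ext_R(M,(\tr M)^\ast)$ to codimension $>n$, and a dimension count then shows that such error terms cannot contribute in cohomological degrees $\le d-1$. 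Carrying this through uniformly over the whole range $n\le j\le d-1$ and for every $0\le t\le d+2$, together with the isolated recovery of $\Ext_R^d(M,R)=0$ when $t=d+2$, is the delicate content; once these vanishings are in hand, the reduction to Theorem \ref{them1} is formal and completes the proof.
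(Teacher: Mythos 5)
Your preliminary reductions agree with the paper: part (2) is indeed the case $t=0$ of part (1), and local freeness of $M$ on $\x^n(R)$ follows from Lemma \ref{Ext and Tor}(1) exactly as in the paper's proof. The genuine gap is in the heart of your argument, the transfer of $\Ext_R^j(M,M)=0$ into the hypothesis $\Ext_R^j(M^\ast,\Hom_R(M,R))=\Ext_R^j(M^\ast,M^\ast)=0$ needed to apply Theorem \ref{them1} to $M$ itself. You correctly identify the budget problem ($2d+2$ vanishings needed versus $2d+1$ available), but your proposed fix fails. In the hyperext spectral sequences comparing $\Ext^{j+2}(\tr M,M^\ast)$, the cohomology of the adjoint complexes, and $\Ext^{j+2}(M,(\tr M)^\ast)$, the error terms are of the form $\Ext_R^p(\tr M,\Ext_R^q(M,R))$ and $\Ext_R^p(M,\Ext_R^q(\tr M,R))$ with $q$ outside the range supplied by $\G_{2d-t+1,t}$; one checks that for every admissible $t$ at least one of the two sides has such terms in total degree $\le d+1$ (for $t\le d$ it is the $\tr M$ side, for $t\ge d+1$ the $M$ side), and when $t\ge d+1$ your final shift $\Ext^{j+2}(M,\syz^2M)\cong\Ext^j(M,M)$ breaks for the same reason. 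The coefficient modules $\Ext^q(M,R)$, $\Ext^q(\tr M,R)$ are indeed supported in codimension $>n$, but they occupy the \emph{second} argument of $\Ext$, where support and grade give no vanishing at all: grade kills $\Ext^p(Y,N)$ for $p<\operatorname{grade}Y$ only when the small module $Y$ is the \emph{first} argument against a maximal Cohen--Macaulay module. In fact already $\Hom_R(M,Y)\ne0$ whenever $0\ne Y$ has support inside $\supp M$ (take $\p$ minimal in $\supp Y$; then $Y_\p$ has a nonzero socle and $M_\p$ surjects onto the residue field of $R_\p$), so these error terms are genuinely nonzero in general and your ``dimension count'' cannot close the gap. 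Your treatment of $t=d+2$ (``recover $\Ext^d(M,R)=0$ by a separate depth/duality argument'') is likewise an unproven assertion, not a proof: that vanishing does not follow formally from $M$ being maximal Cohen--Macaulay together with the remaining hypotheses.

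The paper sidesteps the dualization problem entirely by not applying Theorem \ref{them1} to $M$. It sets $L=\syz^{d-t+2}M$ and applies Theorem \ref{them1} to $L^\ast$. By \cite[Proposition 1.1.1]{I}, $M\in\G_{2d-t+1,t}$ yields $L\in\G_{d-1,d+2}\subseteq\G_{0,2}$ and $\syz^2\tr L\in\G_{d,d+1}$, while $L^\ast\cong\syz^2\tr L$ up to free summands; hence $L$ is reflexive and $\Ext^i(L^\ast,R)=0$ for $1\le i\le d$. Reflexivity is the key point your outline is missing: the self-extension hypothesis required for $L^\ast$ is $\Ext^j((L^\ast)^\ast,(L^\ast)^\ast)\cong\Ext^j(L,L)$, and this is reached from $\Ext^j(M,M)=0$ by elementary dimension shifting, $\Ext^j(L,L)\cong\Ext^{j+d-t+2}(M,L)\cong\Ext^j(M,M)$, using only $\Ext^k(M,R)=0$ for $j\le k\le j+d-t+2\le 2d-t+1$ --- exactly the available budget, with no spectral sequence and no dualization of self-extensions. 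Then $L^\ast$ free gives $L\cong L^{\ast\ast}$ free, so $\pd_RM<\infty$, and $M$ is free by Lemma \ref{Ext and Tor}(1) when $t<d+2$ and trivially ($M=L$) when $t=d+2$. If you want to salvage your outline, this choice of auxiliary module is the missing idea.
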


\begin{proof}
Note that (2) is the situation where we let $t=0$ in (1).
Thus we have only to show the theorem in case (1).
Since $\pd_{R_\p}M_\p<\infty$ and $\Ext_{R_\p}^i(M_\p,R_\p)=0$ for all $\p\in\x^n(R)$ and $1\le i\le n$, the $R$-module $M$ is locally free on $\x^n(R)$ by Lemma \ref{Ext and Tor}(1).
Put $L=\syz^{d-t+2}M$.
Since $M$ is locally free on $\x^n(R)$, so is the $R$-module $L^\ast$.
Since $M \in \G_{2d-t+1,t}$, we get $L\in \G_{d-1,d+2} \subseteq \G_{0,2}$ and $\syz^2\tr L\in \G_{d,d+1}\subseteq\G_{d,0}$ by \cite[Proposition 1.1.1]{I}, while $L^\ast\cong\syz^2\tr L$ up to free summands.
Thus $L$ is reflexive by \cite[Proposition (2.6)(a)]{AB}, and $\Ext^i(L^\ast,R)=0$ for all $1 \le i \le d$.
On the other hand, for each integer $n \le j \le d-1$ there are isomorphisms $0=\Ext^j(M,M)\cong\Ext^{j+1}(M,\syz M)\cong\cdots\cong\Ext^{j+d-t+2}(M,\syz^{d-t+2}M)$ as $\Ext^k(M,R)=0$ for all $j\le k\le j+d-t+2$.
Hence $\Ext^j((L^\ast)^\ast,(L^\ast)^\ast)\cong\Ext^j(L,L)\cong\Ext^{j+d-t+2}(M,L)=0$.
Theorem \ref{them1} implies that $L^\ast$ is free, and so is $L^{\ast\ast}\cong L=\syz^{d-t+2}M$.
Therefore, $M$ has finite projective dimension.
If $t=d+2$, then $M$ coincides with $L$, which is free.
If $t<d+2$, then $2d-t+1\ge d$ and $\Ext^i_R(M,R)=0$ for all $1 \le i \le d$, whence $M$ is free by Lemma \ref{Ext and Tor}(1).
The proof of the theorem is now completed.
\end{proof}

Let $n\ge0$ be an integer, and $M$ be an $R$-module.
We say that $M$ satisfies {\em Serre's condition $(\s_n)$} if $\depth_{R_\p}M_\p\ge\inf\{n,\height\p\}$ for all $\p\in\spec R$.
Below is a direct corollary of the two theorems given above.

\begin{cor}\label{10}
Let $R$ be a $d$-dimensional Cohen--Macaulay ring, and assume that $R$ is locally a complete intersection on $\x^1(R)$.
Then an $R$-module $M$ is projective under each of the following two conditions.
\begin{enumerate}[\rm(1)]
\item
$\Ext_R^i(M^\ast,R)=\Ext_R^j(M,M)=0$ for all $1\le i\le d$ and $1\le j\le\max\{2,d-1\}$, and $M$ satisfies $(\s_2)$.
\item
$\Ext_R^i(M,R)=\Ext_R^j(M,M)=0$ for all $1\le i\le 2d+1$ and $1\le j\le\max\{2,d-1\}$.
\end{enumerate}
\end{cor}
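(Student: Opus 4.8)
The plan is to reduce to the local case and then split according to the height of the maximal ideal. Since $M$ is finitely generated and $R$ is noetherian, projectivity is a local property, so it suffices to prove that $M_\m$ is free over $R_\m$ for every maximal ideal $\m$; write $e=\height\m=\dim R_\m$. The key intermediate goal is to show that $M$ is \emph{locally free on $\x^1(R)$}. Once this is in hand, the maximal ideals with $e\le1$ are settled immediately (they lie in $\x^1(R)$), while for $e\ge2$ I would feed the hypotheses into Theorems \ref{k1} and \ref{them1} applied to the Cohen--Macaulay local ring $R_\m$ with $n=1$.

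For local freeness on $\x^1(R)$, fix $\p$ with $\height\p\le1$, so that $R_\p$ is a complete intersection (in particular Gorenstein) of dimension at most one. First I would check that $M_\p$ is maximal Cohen--Macaulay: under (1) this is immediate from $(\s_2)$, while under (2) it follows because $\Ext^1_{R_\p}(M_\p,R_\p)=0$ forces the finite-length torsion submodule of $M_\p$ to vanish over the one-dimensional Gorenstein ring $R_\p$ (via the exact sequence comparing $M_\p$ with its torsion-free, hence maximal Cohen--Macaulay, quotient). Both hypotheses supply $\Ext^1_{R_\p}(M_\p,M_\p)=\Ext^2_{R_\p}(M_\p,M_\p)=0$, since $\max\{2,d-1\}\ge2$. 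I would then invoke rigidity of cohomology over complete intersections: for a maximal Cohen--Macaulay module over $R_\p$ the vanishing of two consecutive self-extension groups propagates to all higher ones, so $M_\p$ has finite projective dimension, and being maximal Cohen--Macaulay it is free (equivalently, finite projective dimension together with $\Ext^1_{R_\p}(M_\p,M_\p)=0$ forces freeness by Jothilingam's theorem).

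This last step is where I expect the main obstacle to lie: one must cite the correct rigidity/periodicity statement for Ext over complete intersections and verify that the vanishing range genuinely available after localization is long enough to trigger it in dimension $\le1$. The node $k[[x,y]]/(xy)$ is a useful test: for $M\cong k[[x]]$ one computes $\Ext^1(M,M)=0$ but $\Ext^2(M,M)\ne0$, so $\Ext^1$ alone does not force freeness and $\Ext^2$ is genuinely needed, which is reassuring about the shape of the hypotheses.

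With $M$ now locally free on $\x^1(R)$, I would finish the case $e\ge2$ as follows. Under (2), the module $M_\m$ over $R_\m$ locally has finite projective dimension on $\x^1(R_\m)$, satisfies $\Ext^j_{R_\m}(M_\m,M_\m)=0$ for $1\le j\le e-1$ and $\Ext^i_{R_\m}(M_\m,R_\m)=0$ for $1\le i\le2e+1$ (as $e\le d$), so Theorem \ref{k1}(2) gives that $M_\m$ is free. Under (1), $M$ satisfies $(\s_2)$ and is locally free on $\x^1(R)$, hence is reflexive; applying Theorem \ref{them1} to the module $M_\m^\ast$ over $R_\m$ with $N=R_\m$ and $n=1$ — using $\Hom_{R_\m}(M_\m^\ast,R_\m)\cong M_\m$, so that the required vanishing $\Ext^j_{R_\m}(M_\m^{\ast\ast},\Hom(M_\m^\ast,R_\m))=0$ becomes $\Ext^j_{R_\m}(M_\m,M_\m)=0$ for $1\le j\le e-1$, while $\Ext^i_{R_\m}(M_\m^\ast,R_\m)=0$ for $1\le i\le e$ holds by hypothesis — yields that $M_\m^\ast$ is free, whence $M_\m\cong M_\m^{\ast\ast}$ is free as well. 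As $M_\m$ is free for every maximal $\m$, the module $M$ is projective.
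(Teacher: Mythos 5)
Your overall architecture is the same as the paper's: first show that $M$ is locally free on $\x^1(R)$, then localize at primes of height at least $2$ and apply Theorem \ref{them1} in case (1) (after deducing reflexivity from $(\s_2)$ together with local freeness in codimension one) and Theorem \ref{k1}(2) in case (2). Those two closing steps are carried out correctly, with exactly the substitutions the paper makes ($M^\ast$ and $N=R$, $n=1$ in Theorem \ref{them1}; $n=1$ in Theorem \ref{k1}(2)). The gap is precisely where you suspected it: the step establishing local freeness on $\x^1(R)$. The principle you invoke --- that for a maximal Cohen--Macaulay module over a complete intersection the vanishing of two consecutive self-extension groups propagates to all higher ones --- is a theorem only in codimension one: over a hypersurface an MCM module has a $2$-periodic minimal free resolution by Eisenbud's matrix factorization theorem, whence $\Ext^i_{R_\p}(M_\p,M_\p)\cong\Ext^{i+2}_{R_\p}(M_\p,M_\p)$ for $i\ge1$. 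But $R_\p$ here is only assumed to be a complete intersection, and can be, say, an Artinian ring such as $k[x,y]/(x^2,y^2)$, over which every module is MCM and resolutions need not be eventually periodic. For codimension $c\ge2$ there is no two-term propagation theorem to cite: the module $\bigoplus_i\Ext^i_{R_\p}(M_\p,M_\p)$ is finitely generated and graded over the ring of cohomology operators $R_\p[\chi_1,\dots,\chi_c]$ with $\deg\chi_j=2$, and vanishing in consecutive degrees propagates upward only from degrees beyond the generating degree of this module, which MCM-ness does not bound by $2$; the usable rigidity statements in the literature require longer strings of consecutive vanishings (roughly one more than the codimension). Your sanity check $k[[x,y]]/(xy)$ is a hypersurface, so it cannot detect this issue.

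The conclusion you need is nevertheless true, and the paper obtains it by lifting theory rather than rigidity: pass to the $\p R_\p$-adic completion $\widehat{R_\p}$, which is a complete regular local ring modulo a regular sequence, note that $\Ext^1(M_\p,M_\p)=\Ext^2(M_\p,M_\p)=0$ (available in both cases, as $\max\{2,d-1\}\ge2$), and apply the Auslander--Ding--Solberg lifting result \cite[Proposition 1.8]{ADS}: the completed module lifts along the regular sequence to the regular ring, hence has finite projective dimension, hence $\pd_{R_\p}M_\p<\infty$. Then Lemma \ref{Ext and Tor}(1) with $N=M_\p$ gives $\pd_{R_\p}M_\p=\sup\{i\mid\Ext^i_{R_\p}(M_\p,M_\p)\ne0\}$; since $\pd_{R_\p}M_\p=\depth R_\p-\depth M_\p\le1$ and $\Ext^1_{R_\p}(M_\p,M_\p)=0$, the module $M_\p$ is free. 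Two remarks on this repair: completeness (genuinely needed for lifting) is handled by completing first, and no maximal Cohen--Macaulayness of $M_\p$ is required, so your preparatory torsion-killing argument in case (2) (which is correct, but uses $\Ext^1_{R_\p}(M_\p,R_\p)=0$) becomes superfluous. With the rigidity citation replaced by this lifting argument, your proof closes the gap and coincides with the paper's.
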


\begin{proof}
For each $\p\in\x^1(R)$, the local ring $R_\p$ is a complete intersection and $\Ext_{R_\p}^i(M_\p,M_\p)=0$ for $i=1,2$.
Applying \cite[Proposition 1.8]{ADS} to the $\p R_\p$-adic completion of $R_\p$ and using Lemma \ref{Ext and Tor}(1), we observe that $M_\p$ is $R_\p$-free.
Therefore, $M$ is locally free on $\x^1(R)$.

(1) Both $R$ and $M$ satisfy Serre's condition $(\s_2)$.
It follows from \cite[Theorem 3.6]{EG} that $M$ is reflexive.
We have $\Ext_R^i(M^\ast,R)=\Ext_R^j((M^\ast)^\ast,(M^\ast)^\ast)=0$ for all $1\le i\le d$ and $1\le j\le d-1$.
Applying Theorem \ref{them1} after localization at each prime ideal shows that $M^\ast$ is projective, and so is $(M^\ast)^\ast\cong M$.

(2) Fix $\p\in\spec R$.
If $\height\p\le1$, then $M_\p$ is $R_\p$-free as mentioned above.
If $\height\p\ge2$, then $2\le\dim R_\p\le d$, and applying Theorem \ref{k1}(2) to $R_\p$, $M_\p$ and $n=1$ shows $M_\p$ is $R_\p$-free.
Thus $M$ is $R$-projective.
\end{proof}

The following result is a corollary of Corollary \ref{10}(2).

\begin{cor}\label{13}
Let $S$ be a locally excellent ring of dimension $e$ which is locally a complete intersection on $\x^1(S)$.
Let $\xx=x_1,\dots,x_n$ be a sequence of elements of $S$ which is locally regular on $\V(\xx)$.
Let $R=S/(\xx)$ be a Cohen--Macaulay ring.
Let $M$ be an $R$-module.
Suppose that $\Ext_R^i(M,R)=\Ext_R^j(M,M)=0$ for all $1\le i\le 2e+1$ and $1\le j\le\max\{2,e-1\}$.
Then $M$ is a projective $R$-module.
\end{cor}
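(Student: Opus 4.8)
The plan is to derive this from Corollary \ref{10}(2), applied not to $R$ but to $S$. One cannot invoke Corollary \ref{10} for $R$ directly, because the quotient $R=S/(\xx)$ need not be locally a complete intersection on $\x^1(R)$: for instance, if $S$ is a non-Gorenstein Cohen--Macaulay normal domain (so $S$ is a complete intersection on $\x^1(S)$) and $\xx$ is a maximal $S$-regular sequence, then $R$ is Artinian and not a complete intersection, yet $\x^1(R)=\spec R$. This is also the reason the Ext-vanishing hypotheses are stated with $e=\dim S$ rather than $\dim R$: after transporting the problem to $S$, whose dimension is at most $e$, the bounds $2e+1$ and $\max\{2,e-1\}$ leave exactly the room needed for a ring of dimension $\le e$.

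First I would reduce to the local case. Since projectivity of $M$ may be tested at the maximal ideals of $R$, fix a maximal ideal $\m$ of $R$, let $\n\subseteq S$ be its preimage, and replace $S,R,M$ by $S_\n,R_\m,M_\m$. Then $S$ is local and excellent with $\dim S\le e$, the localization $S_\p$ is a complete intersection for every $\p$ with $\height\p\le1$, and $\xx$ is an $S$-regular sequence since $\n\in\V(\xx)$. Because $\xx$ is $S$-regular and $R=S/(\xx)$ is Cohen--Macaulay, the depth and dimension of $S$ both exceed those of $R$ by the length of $\xx$, so $S$ is Cohen--Macaulay; hence $S$ meets the ring-theoretic requirements of Corollary \ref{10}.

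The crucial step is to lift $M$ to $S$. Exploiting $\Ext_R^2(M,M)=0$ and the lifting machinery of Auslander--Ding--Solberg \cite{ADS}, I would lift $M$ one element at a time along the chain $R=S/(x_1,\dots,x_n),\ S/(x_1,\dots,x_{n-1}),\ \dots,\ S$, arriving at an $S$-module $\tilde M$ on which $\xx$ is a regular sequence and with $\tilde M/(\xx)\tilde M\cong M$. At each stage the obstruction to lifting lies in the second self-extension group of the current module; combining the change-of-rings isomorphism $\Ext_{S'}^\bullet(\tilde M',N)\cong\Ext_{S'/(x)}^\bullet(\tilde M'/x\tilde M',N)$, valid when $x$ is regular on $\tilde M'$ and $x$ annihilates $N$, with Nakayama's lemma shows that this group vanishes at every stage, so the lifting goes through.

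The same exact sequences $0\to\tilde M'\xrightarrow{x}\tilde M'\to\tilde M'/x\tilde M'\to0$ transport the Ext-vanishing hypotheses upward. Applying $\Hom_{S'}(-,S')$ and the Rees isomorphism $\Ext_{S'}^{i+1}(\tilde M'/x\tilde M',S')\cong\Ext_{S'/(x)}^i(\tilde M'/x\tilde M',S'/(x))$ makes multiplication by $x$ surjective on $\Ext_{S'}^i(\tilde M',S')$ in the relevant degrees, and Nakayama's lemma forces these to vanish; applying $\Hom_{S'}(\tilde M',-)$ and the change-of-rings isomorphism above handles the self-extensions analogously. Iterating up to $S$ yields $\Ext_S^i(\tilde M,S)=0$ for $1\le i\le2e+1$ and $\Ext_S^j(\tilde M,\tilde M)=0$ for $1\le j\le\max\{2,e-1\}$, which (as $\dim S\le e$) covers the ranges demanded by Corollary \ref{10}(2); that corollary makes $\tilde M$ free over $S$. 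Since $\xx$ is regular on both $S$ and $\tilde M$, one has $\pd_S\tilde M=\pd_R M$, so $M$ is free over $R_\m$, and letting $\m$ vary gives that $M$ is projective. The step I expect to be hardest is the lifting: one must guarantee that $M$ lifts along the entire regular sequence, controlling the obstruction at each intermediate quotient and verifying that the self-Ext vanishing responsible for liftability genuinely propagates up the chain; keeping straight which change-of-rings isomorphism applies at each level is the principal bookkeeping burden, whereas once $\tilde M$ is produced with $\xx$ acting regularly, the transfer of hypotheses and the appeal to Corollary \ref{10}(2) are routine.
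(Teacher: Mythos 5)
Your overall strategy---localize, lift $M$ along the regular sequence using the Auslander--Ding--Solberg machinery with $\Ext^2$-vanishing killing the obstructions, transfer the Ext hypotheses upward via Rees-type isomorphisms and Nakayama, and finish by applying Corollary \ref{10}(2) at the top of the chain---is exactly the skeleton of the paper's proof, which proceeds by induction on $n$, lifting one $x_i$ at a time. But there is a genuine gap: the lifting step is not valid over the ring you apply it to. The lifting result you need, \cite[Proposition 1.6]{ADS}, requires the base ring to be \emph{complete}. Vanishing of $\Ext_R^2(M,M)$ only kills the obstructions to lifting $M$ successively modulo $x^k$ for every $k$; the inverse limit of these compatible partial liftings is a module over the $x$-adic completion, not over the local ring itself, and in general a module need not lift over a non-complete local ring even when all obstruction groups vanish. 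You localize at the preimage $\n$ of a maximal ideal of $R$ but never complete, so the assertion ``the lifting goes through'' at each stage is unjustified.

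This omission is also visible in your accounting of hypotheses: you never use the assumption that $S$ is locally excellent, which should be a warning sign. The paper consumes that hypothesis precisely to repair this issue: after localizing it replaces $S$ by its $\n$-adic completion $\widehat S$, and excellence (geometrically regular formal fibers, combined with \cite[Remark 2.3.5]{BH}) guarantees that $\widehat S$ is still locally a complete intersection on $\x^1(\widehat S)$, so that the inductive argument and ultimately Corollary \ref{10}(2) can be run over the complete ring; freeness of the completed module then descends to $M$ by faithful flatness. If you insert this completion step at the start---and verify the complete-intersection condition in codimension one for $\widehat S$, which is exactly where excellence enters---the rest of your argument (transfer of Ext vanishing up the chain, application of Corollary \ref{10}(2), and the equality of projective dimensions) is sound and coincides with the paper's proof.
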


\begin{proof}
Passing through localization at each $\p\in\V(\xx)$, we may assume that $(S,\n)$ is an excellent local ring.
Let $\widehat S$ be the $\n$-adic completion of $S$.
Let $P\in\x^1(\widehat S)$ and put $\p=P\cap S$.
Similarly as in part (3) in the proof of Theorem \ref{them1}, we have $\p\in\x^1(S)$ and the induced map $S_\p\to\widehat S_P$ is faithfully flat.
By assumption, $S_\p$ is a complete intersection.
As $S$ is an excellent local ring, the closed fiber $\widehat S_P/\p\widehat S_P$ is regular.
It follows from \cite[Remark 2.3.5]{BH} that $\widehat S_P$ is a complete intersection.
Thus $\widehat S$ is locally a complete intersection on $\x^1(\widehat S)$.
Replacing $S$ with $\widehat S$, we may assume that $S$ is complete.

We prove the assertion by using induction on $n$.
When $n=0$, we have $R=S$ and are done by Corollary \ref{10}(2).
Let $n>0$ and set $T=S/(x_1,\dots,x_{n-1})$.
The element $x_n$ is $T$-regular and $R=T/x_nT$.
As $T$ is complete and $\Ext_R^2(M,M)=0$, we find a $T$-module $N$ such that $x_n$ is $N$-regular and $M\cong N/x_n N$ by \cite[Proposition 1.6]{ADS}.
There is an exact sequence $0\to N\xrightarrow{x_n}N\to M\to0$.
By \cite[Lemma 3.1.16]{BH} we get exact sequences $\Ext_T^i(N,T)\xrightarrow{x_n}\Ext_T^i(N,T)\to\Ext_R^i(M,R)=0$ and $\Ext_T^j(N,N)\xrightarrow{x_n}\Ext_T^j(N,N)\to\Ext_R^j(M,M)=0$ for $1\le i\le 2e+1$ and $1\le j\le\max\{2,e-1\}$.
By Nakayama's lemma, for such $i,j$ we have $\Ext_T^i(N,T)=\Ext_T^j(N,N)=0$.
By the induction hypothesis, $N$ is $T$-free, whence $M$ is $R$-free.
\end{proof}

Here we recall a celebrated long-standing conjecture due to Auslander and Reiten \cite{AR}.

\begin{conj}[Auslander--Reiten]\label{8}
Every $R$-module $M$ such that $\Ext^{>0}_R(M,M\oplus R)=0$ is projective.
\end{conj}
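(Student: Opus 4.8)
In full generality this is the Auslander--Reiten conjecture, a long-standing open problem, so the aim cannot be a proof for arbitrary $R$; rather, the plan is to establish it for the classes of rings reached by the machinery of this section, above all Cohen--Macaulay normal rings. The strategy is to show first that the conjecture is governed by its behaviour in codimension one, and then to note that normality forces the right behaviour there. This is exactly the content recorded in Corollary \ref{k2}, and the engine is Theorem \ref{k1}(2).

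Since projectivity of a finitely generated module is a local property and $\Ext$ commutes with localization, it suffices to verify, for each $\p\in\spec R$, that $R_\p$ satisfies the conjecture. I would prove this by induction on $d=\height\p$, after localizing so that $R$ is local with maximal ideal $\p$. For $d\le1$ the conclusion is precisely the codimension-one hypothesis. For $d\ge2$, take an $R$-module $M$ with $\Ext_R^{>0}(M,M\oplus R)=0$; applying the induction hypothesis to the rings $R_\q$ with $\q\ne\p$ shows that $M_\q$ is free at every non-maximal prime, so $M$ is locally free---hence locally of finite projective dimension---on $\x^1(R)$. The vanishing hypothesis splits as $\Ext_R^i(M,R)=0$ for all $i\ge1$ (in particular for $1\le i\le 2d+1$) and $\Ext_R^j(M,M)=0$ for all $j\ge1$ (in particular for $1\le j\le d-1$), which are exactly the hypotheses of Theorem \ref{k1}(2) with $n=1$. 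That theorem then forces $M$ to be free, completing the induction.

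To deduce the normal case, I would use that a Cohen--Macaulay normal ring satisfies Serre's condition $(R_1)$, so that $R_\p$ is regular whenever $\height\p\le1$. Over a regular local ring the conjecture is immediate: if $\Ext_R^{>0}(M,R)=0$ then $M$ has finite projective dimension, and Lemma \ref{Ext and Tor}(1) (with $N=R$) places the top nonvanishing $\Ext$ in degree $\pd_R M$, forcing $\pd_R M=0$ and hence $M$ free. Thus the codimension-one hypothesis of the reduction is automatically satisfied, and the previous paragraph yields the conjecture for every localization, hence for $R$ itself.

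The main obstacle I anticipate is the bookkeeping in the inductive step. One must check that the codimension-one hypothesis descends to every localization $R_\q$ (so that the induction is legitimate, since the primes of $R_\q$ of height at most one are those $R_{\mathfrak r}$ with ${\mathfrak r}\subseteq\q$ of height at most one), and that $M$ is genuinely locally of finite projective dimension on $\x^1(R)$, as this is the exact input Theorem \ref{k1} requires. Everything else---the splitting of the $\Ext$ vanishing and the passage between local and global statements---is formal.
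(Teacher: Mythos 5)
Your proposal is correct and follows essentially the same route as the paper: recognize that the statement is only established for rings covered by Corollary \ref{k2}, reduce to the codimension-one hypothesis via Theorem \ref{k1}(2) with $n=1$ (after checking local freeness, hence local finite projective dimension, on $\x^1$), and settle the normal case by regularity in codimension one together with Lemma \ref{Ext and Tor}(1). The only cosmetic difference is your induction on $\height\p$, which is superfluous: since Theorem \ref{k1}(2) only needs local freeness on $\x^1(R_\p)$, the paper obtains this directly from the codimension-one hypothesis (primes of height at most one in $R_\p$ contract to primes of height at most one in $R$) without any induction.
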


The corollary below is deduced from Theorem \ref{k1}(2), which gives positive answers to Conjecture \ref{8}.

\begin{cor}\label{k2}
\begin{enumerate}[\rm(1)]
\item
Let $R$ be a Cohen--Macaulay ring.
Let $M$ be an $R$-module with $\Ext^{>0}_R(M,M\oplus R)=0$.
If $M$ is locally of finite projective dimension on $\x^1(R)$, then $M$ is a projective $R$-module.
\item
Let $R$ be a Cohen--Macaulay ring.
Suppose that $R$ locally satisfies Conjecture \ref{8} on $\x^1(R)$.
Then $R$ locally satisfies Conjecture \ref{8} on $\spec R$.
In particular, $R$ satisfies Conjecture \ref{8}.
\item
Conjecture \ref{8} holds true for every Cohen--Macaulay normal ring $R$.
\end{enumerate}
\end{cor}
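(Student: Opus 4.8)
The plan is to deduce the entire corollary from Theorem~\ref{k1}(2): I would prove (1) by localizing and feeding the localizations to that theorem, and then obtain (2) and (3) by formal reductions. Throughout I use that for a finitely generated module over the noetherian ring $R$ the formation of $\Ext$ commutes with localization, so that the hypothesis $\Ext_R^{>0}(M,M\oplus R)=0$ passes to $\Ext_{R_\p}^{>0}(M_\p,M_\p\oplus R_\p)=0$ for every $\p\in\spec R$.

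For (1), since projectivity of a finitely generated module over a noetherian ring is a local property, it suffices to show that $M_\p$ is $R_\p$-free for each $\p$, and I would split on $\height\p$. If $\height\p\le1$, then $\p\in\x^1(R)$, so $\pd_{R_\p}M_\p<\infty$; together with $\Ext_{R_\p}^{>0}(M_\p,R_\p)=0$, Lemma~\ref{Ext and Tor}(1) forces $\pd_{R_\p}M_\p=0$, i.e.\ $M_\p$ is free. If $\height\p\ge2$, set $d=\dim R_\p=\height\p\ge2$ and apply Theorem~\ref{k1}(2) to the Cohen--Macaulay local ring $R_\p$, the module $M_\p$, and $n=1$. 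The hypotheses transfer: the primes of $R_\p$ of height at most one arise from primes $\q\in\x^1(R)$ with $\q\subseteq\p$, so $M_\p$ is locally of finite projective dimension on $\x^1(R_\p)$, while the vanishing of $\Ext_{R_\p}^j(M_\p,M_\p)$ for $1\le j\le d-1$ and of $\Ext_{R_\p}^i(M_\p,R_\p)$ for $1\le i\le 2d+1$ is inherited from the global assumption. Hence $M_\p$ is free.

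For (2), I would fix $\p\in\spec R$ and an $R_\p$-module $X$ with $\Ext_{R_\p}^{>0}(X,X\oplus R_\p)=0$, and verify the one remaining hypothesis of (1) for the ring $R_\p$, namely that $X$ is locally of finite projective dimension on $\x^1(R_\p)$: each such prime localizes $R_\p$ to some $R_{\q'}$ with $\q'\in\x^1(R)$, which satisfies Conjecture~\ref{8} by assumption, so the corresponding localization of $X$ is free. Part (1) applied to $R_\p$ then makes $X$ free, showing that $R_\p$ satisfies Conjecture~\ref{8}. The closing assertion follows by localizing once more: a module $M$ over $R$ with $\Ext_R^{>0}(M,M\oplus R)=0$ becomes free at every $\p$, hence is projective. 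Part (3) is then the special case in which normality enters: for $\p\in\x^1(R)$ the ring $R_\p$ is normal of dimension at most one, hence regular by Serre's condition $(R_1)$, and a regular local ring satisfies Conjecture~\ref{8} trivially, since finite projective dimension together with $\Ext^{>0}(\,\cdot\,,R_\p)=0$ forces freeness by Lemma~\ref{Ext and Tor}(1); so (2) applies.

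The real work is confined to (1), and within it to the dichotomy on $\height\p$: Theorem~\ref{k1}(2) demands dimension at least two, so the codimension-one primes cannot be fed to it and must be handled separately through the finite-projective-dimension hypothesis and Lemma~\ref{Ext and Tor}(1). The main obstacle is therefore not a single deep step but the careful verification that the hypotheses of Theorem~\ref{k1}(2)---above all local finiteness of projective dimension on $\x^1(R_\p)$---are inherited by the localizations $R_\p$. Once this bookkeeping is in place, (2) and (3) are purely formal, the only genuine input in (3) being that normality forces regularity in codimension one.
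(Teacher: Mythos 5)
Your proposal is correct and follows essentially the same route as the paper: reduce (1) to the local case, handle height (dimension) at most one via the finite projective dimension hypothesis and Lemma~\ref{Ext and Tor}(1), handle dimension at least two via Theorem~\ref{k1}(2) with $n=1$, and then deduce (2) and (3) by the same formal localization arguments, with normality entering in (3) only through regularity in codimension one. The hypothesis-transfer bookkeeping you carry out (heights of contracted primes, localization of Ext) is exactly what the paper's proof does, just written out more explicitly.
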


\begin{proof}
(1) We may assume that the ring $R$ is local.
Let $\m$ be the maximal ideal of $R$.
If $\dim R\le1$, then $\m$ belongs to $\x^1(R)$ and $M$ has finite projective dimension.
As $\Ext^{>0}(M,R)=0$, Lemma \ref{Ext and Tor}(1) implies that $M$ is free.
If $\dim R\ge2$, then we apply Theorem \ref{k1}(2) to $n=1$ to deduce the assertion.

(2) Fix $\p\in\spec R$.
Let $M\in\mod R_\p$ with $\Ext^{>0}_{R_\p}(M,M\oplus R_\p)=0$.
Write $M=N_\p$ with $N\in\mod R$.
For $Q\in\x^1(R_\p)$ and $\q=Q\cap R$,  we have $\p\supseteq\q\in\x^1(R)$ and $\Ext^{>0}_{R_\q}(N_\q,N_\q\oplus R_\q)=0$.
By assumption, $N_\q$ is $R_\q$-free.
Hence $M$ is locally free on $\x^1(R_\p)$, and $R_\p$-free by (1).
Thus $R_\p$ satisfies Conjecture \ref{8}.

(3) As the ring $R$ is normal, it is locally a discrete valuation ring or a field on $\x^1(R)$.
Hence Conjecture \ref{8} locally holds on $\x^1(R)$.
It follows from (2) that Conjecture \ref{8} holds true for $R$.
\end{proof}

Finally, we compare our results obtained in this section with ones in the literature.

\begin{rmk}\label{12}
\begin{enumerate}[(1)]
\item
Corollary \ref{11} implies \cite[Theorem 1.4]{secm} (a bit weaker version of \cite[Corollary 3.9(1)]{ST}).
Indeed, let $R,M$ be as in \cite[Theorem 1.4]{secm}.
Similarly as in the beginning of \cite[Proof of Theorem 1.4]{secm} $M$ is reflexive and locally free on $\x^n(R)$.
Then Corollary \ref{11} applies to $M^\ast$ to yield that $M$ is free.
\item
Theorem \ref{k1}(1) partly refines \cite[Theorem 3.14]{DEL}.
When $R$ is Cohen--Macaulay and $M\in\G_{0,t}$, the same as Theorem \ref{k1}(1) for $n=1$ is asserted by \cite[Theorem 3.14]{DEL} under the assumption that $R$ is a quotient of a regular local ring that is locally Gorenstein on $\x^1(R)$ and contains $\Q$, and $\depth M=t$.
\item
Let $R$ be a Cohen--Macaulay local ring of dimension $d>0$ with a canonical module $\omega$.
By Theorem \ref{k1}(1) and \cite[Theorem 3.6]{EG}, $R$ is Gorenstein if it is locally Gorenstein on $\x^1(R)$ and $\Ext^i_R(\omega,R)=0$ for $1\le i\le2d-1$.
This is a weak version of \cite[Theorem 2.1]{ABS}, which asserts a generically Gorenstein local ring $R$ of dimension $d$ with a dualizing complex $D$ is Gorenstein if $\Ext^i_R(D,R)=0$ for $1 \le i \le d$.
\item
Theorem \ref{k1}(2) highly refines \cite[Corollary 1.5]{ST} in the case of a Cohen--Macaulay ring.
More precisely, if $R$ is a Cohen--Macaulay ring, then \cite[Corollary 1.5]{ST} asserts the same as Theorem \ref{k1}(2) under the additional assumptions that $M$ is locally free on $\x^n(R)$, that $\Ext^{>0}_R(M,R)=0$, and that either $\Ext_R^i(\Hom_R(M,M),R)=0$ for all $n<i\le\dim R$ or $\Hom_R(M,M)$ has finite G-dimension.
\item
Corollary \ref{10}(1) considerably improves \cite[Theorem 1.3]{HL}\footnote{There is a typo in \cite[Main Theorem and Theorem 1.3]{HL}: it is necessary to assume $\Ext_R^2(M,M)=0$ when $d=1$ for the proof to work. It suffices to replace the range $[1,d]$ of integers $i$ such that $\Ext_R^i(M,M)=0$ with $[1,\max\{2,d\}]$.}.
To be precise, Corollary \ref{10}(1) removes the assumptions imposed in \cite[Theorem 1.3]{HL} that $R$ is local and complete, that $R$ is either Gorenstein or contains $\Q$, that $M$ has constant rank, and that $\Ext_R^i(M,R)=0$ for all $1\le i\le d$.
Moreover, Corollary \ref{10}(1) shrinks the range $[1,\max\{2,d\}]$ of integers $j$ with $\Ext_R^j(M,M)=0$ to $[1,\max\{2,d-1\}]$, and relaxes the assumption of $M$ being maximal Cohen--Macaulay to $M$ satisfying $(\s_2)$.
\item
Corollary \ref{10}(2) reaches the same conclusion as \cite[Theorem 3.16]{DEL} by assuming more vanishings of Ext modules and instead weakening normality to the local complete intersection property in codimension one and removing the assumption that $M$ is maximal Cohen--Macaulay and with $\Hom_R(M,M)$ free.
\item
Corollary \ref{13} refines \cite[Main Theorem]{HL}\setcounter{footnote}{0}\footnotemark, that is to say, Corollary \ref{13} removes the assumptions given in \cite[Main Theorem]{HL} that $S$ is either Gorenstein or contains $\Q$ and that $M$ has constant rank, and shrinks the range $[1,\max\{2,\dim S\}]$ of integers $j$ with $\Ext_R^j(M,M)=0$ to $[1,\max\{2,\dim S-1\}]$.
\item
Corollary \ref{k2}(2) extends \cite[Theorem 3]{Ar} from Gorenstein rings to Cohen--Macaulay rings.
\item
By virtue of \cite[Theorem 0.1]{HL}, Conjecture \ref{8} is known to hold true for a locally excellent Cohen--Macaulay normal ring $R$ that contains $\Q$.
Corollary \ref{k2}(3) removes from this statement the assumptions that $R$ is locally excellent and that $R$ contains $\Q$.
\end{enumerate}
\end{rmk}

\begin{ac}
The authors would like to thank Olgur Celikbas, Hiroki Matsui, Hidefumi Ohsugi, Kazuho Ozeki and Shunsuke Takagi for their valuable comments.
The authors are also indebted to the anonymous referee for pointing out an error in Corollary \ref{a} in the previous version of the paper, which is corrected in this version.
\end{ac}

\end{document}